\newcommand{\C}{\mathbf{C}}
\newcommand{\D}{\mathbf{D}}
\newcommand{\N}{\mathbf{N}}
\newcommand{\Pe}{\mathbf{P}}
\newcommand{\R}{\mathbf{R}}
\newcommand{\Z}{\mathbf{Z}}
\newtheorem{theorem}{Theorem}[section]
\newtheorem{corollary}[theorem]{Corollary}
\newtheorem{example}[theorem]{Example}
\numberwithin{equation}{section}
\begin{document}

\title[PDEs of Waring's-problem form in $\C^n$]{\bf On partial differential equations of Waring's-problem form in several complex variables}

\author[Q. Han]{Qi Han}

\address{Department of Computational, Engineering, and Mathematical Sciences
\vskip 2pt Texas A\&M University-San Antonio, San Antonio, Texas 78224, USA
\vskip 2pt Email: {\sf qhan@tamusa.edu}}

\thanks{{\sf 2020 Mathematics Subject Classification.} 35F20, 32A15, 32A20, 11P05.}
\thanks{{\sf Keywords.} Characteristics, partial differential equations of super-Fermat or Waring's-problem form, eikonal equations, entire and meromorphic functions in several complex variables, pseudoprime.}


\begin{abstract}
In this paper, we first consider the pseudoprimeness of meromorphic solutions $u$ to a family of partial differential equations (PDEs) $H(u_{z_1},u_{z_2},\ldots,u_{z_n})=P(u)$ of Waring's-problem form, where $H(z_1,z_2,\ldots,z_n)$ is a nontrivial homogenous polynomial of degree $\ell$ in $\C^n$ and $P(w)$ is a polynomial of degree $\hbar$ in $\C$ with all zeros distinct.
Then, we study when these PDEs can admit entire solutions in $\C^n$ and further find these solutions for important cases including particularly $u^\ell_{z_1}+u^\ell_{z_2}+\cdots+u^\ell_{z_n}=u^\hbar$, which are often said to be PDEs of super-Fermat form if $\hbar=0,\ell$ and an eikonal equation if $\ell=2$ and $\hbar=0$.
\end{abstract}

\maketitle

\section{Introduction}\label{Int} 
In this work, we first consider the pseudoprimeness of meromorphic solutions $u$ to a family of partial differential equations $H(u_{z_1},u_{z_2},\ldots,u_{z_n})=P(u)$ of Waring's-problem form, where $H(z_1,z_2,\ldots,z_n)$ is a homogenous polynomial of degree $\ell\hspace{0.2mm}(\geq1)$ in $\C^n$ and $P(w)$ is a polynomial of degree $\hbar$ in $\C$ with all its zeros distinct.
This paper is inspired by Hayman \cite{Ha2,Ha3}; see also Gundersen-Hayman \cite{GH} and several other apposite results discussed later.

Now, let $u$ be a (generic) meromorphic function in $\C^n$.
$u(z)$ is said to admit a factorization $u(z)=f(g(z))$ for a meromorphic left factor $f:\C\to\Pe:=\C\cup\{\infty\}$ and an entire right factor $g:\C^n\to\C$ ($g$ can be meromorphic, provided $f$ is rational).
$u$ is said to be {\sl prime} if all such factorizations lead to either $f$ bilinear or $g$ linear, and $u$ is said to be {\sl pseudoprime} if all such factorizations lead to either $f$ rational or $g$ a polynomial.

The first mathematically rigorous treatment on factorization of meromorphic functions in $\C$ using pseudoprimeness seems to be Gross \cite{Gr}, which was later extended to $\C^n$ by Li-Yang \cite{LY}.
This research topic has found its use in other fields of complex analysis as demonstrated in the work of Bergweiler \cite{Be1,Be2} on normal families and quasiregular maps.
On the other hand, Li \cite{Li1} studied factorization of entire solutions to super-Fermat form partial differential equations in $\C^n$ and proved that all such solutions to $H(u_{z_1},u_{z_2},\ldots,u_{z_n})=1$ are prime; extensions of \cite{Li1} to meromorphic solutions were given by Saleeby \cite{Sa2} and Han \cite{Ha}.

The first main result of this paper considers a general form $P(w)$ that includes those studied in the earlier works as special cases, which is formulated as follows.
(This result seems the first work in literature with general polynomials $H,P$ involved, and indicates a kin relation between solutions to these PDEs and solutions to some well-known ODEs.)

\begin{theorem}\label{Thm1}
Let $u(z)$ be a meromorphic solution in $\C^n$ to the partial differential equation
\begin{equation}\label{Eq1.1}
H(u_{z_1},u_{z_2},\ldots,u_{z_n})=P(u),
\end{equation}
where $H(z_1,z_2,\ldots,z_n)$ is a nontrivial homogenous polynomial of degree $\ell\hspace{0.2mm}(\geq1)$ in $\C^n$ and $P(w)$ is a polynomial of degree $\hbar$ in $\C$ having all zeros distinct.
Then, $u$ is generically pseudoprime.
Moreover, for the cases where $u$ may not be pseudoprime, we have
\vskip 3pt\noindent{\bf Case 1.}
$\ell=\hbar=1$ and $\displaystyle{f(w)=A_1e^{A_0w}+\alpha_1}$;
\vskip 0pt\noindent{\bf Case 2.}
$\ell=1$, $\hbar=2$ and $\displaystyle{f(w)=\frac{\alpha_2A_1e^{A_0(\alpha_1-\alpha_2)w}-\alpha_1}{A_1e^{A_0(\alpha_1-\alpha_2)w}-1}}$;
\vskip 0pt\noindent{\bf Case 3.}
$\ell=\hbar=2$ and $\displaystyle{f(w)=\frac{\alpha_1-\alpha_2}{2}\sin\bigl(\sqrt{-A_0}\hspace{0.2mm}w+A_1\bigr)+\frac{\alpha_1+\alpha_2}{2}}$;
\vskip 0pt\noindent{\bf Case 4.}
$\ell=2$, $\hbar=3$ and $f(w)$ is a transcendental meromorphic solution to
\begin{equation}\label{Eq1.1-1}
(w-a_1)^{m_1}(w-a_2)^{m_2}(f^\prime)^2(w)=A_0\prod^3_{j=1}(f(w)-\alpha_j);
\end{equation}
\vskip 0pt\noindent{\bf Case 5.}
$\ell=2$, $\hbar=4$ and $f(w)$ is a transcendental meromorphic solution to
\begin{equation}\label{Eq1.1-2}
(w-a_1)^{m_1}(w-a_2)^{m_2}(f^\prime)^2(w)=A_0\prod^4_{j=1}(f(w)-\alpha_j).
\end{equation}
\vskip 0pt\noindent Here, $f(w):\C\to\Pe$ is a meromorphic left factor of $u(z)=f(g(z))$ with associated entire right factor $g(z):\C^n\to\C$ transcendental, $m_1,m_2\geq0$ are integers with $m_1+m_2\leq2$, $\alpha_1,\alpha_2,\alpha_3,\alpha_4$ are pairwise distinct complex numbers, and $A_0\cdot A_1\neq0,a_1\neq a_2$ are constants.
\end{theorem}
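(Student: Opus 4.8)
The plan is to start from a nontrivial factorization $u(z)=f(g(z))$ with $f$ meromorphic on $\C$ and $g$ entire on $\C^n$; if every such $g$ is necessarily a polynomial then $u$ is pseudoprime and there is nothing to prove, so I assume $g$ is transcendental and must show that $f$ is then rational unless we land in one of the five listed cases. Differentiating and using that $H$ is homogeneous of degree $\ell$, one has $u_{z_j}=f'(g)\,g_{z_j}$ and hence
\[
\bigl(f'(g)\bigr)^\ell\,H(g_{z_1},g_{z_2},\ldots,g_{z_n})=P(f(g))\qquad\text{on }\C^n.
\]
Off the thin set where $\nabla g$ or $f'\circ g$ vanishes, this exhibits $H(g_{z_1},\ldots,g_{z_n})$ locally as a holomorphic function of $g$; since $P(f)/(f')^\ell$ is a globally meromorphic function of $w\in\C$, the identity theorem upgrades this to the global relation $H(g_{z_1},\ldots,g_{z_n})=R\circ g$ on $\C^n$, where $R(w):=P(f(w))/\bigl(f'(w)\bigr)^\ell$. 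In particular $f$ satisfies the first-order algebraic ODE $P(f(w))=R(w)\,\bigl(f'(w)\bigr)^\ell$ on $\C$, which is the kin relation with ODEs alluded to in the statement.

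Next I would pin down $R$. Since $H(g_{z_1},\ldots,g_{z_n})=R\circ g$ is entire, $R$ can have at most one pole, at a value omitted by $g$. Comparing Nevanlinna characteristics — using the several-variables lemma on the logarithmic derivative to get $T(r,g_{z_j})\le T(r,g)+S(r,g)$, hence $T(r,H(g_{z_1},\ldots,g_{z_n}))\le \ell\,T(r,g)+S(r,g)$, together with the fact that composing a transcendental function with a transcendental entire $g$ inflates the characteristic beyond any multiple of $T(r,g)$ — forces $R$ to be rational with ``degree'' at most $\ell$. A short local computation at the poles of $f$ and at the zeros of $f'$ then excludes a finite pole of $R$ as well, so $R$ is a polynomial with $\deg R\le\ell$.

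The heart of the matter is the analysis of $P(f)=R\,(f')^\ell$ for transcendental meromorphic $f$, with $P(w)=c\prod_{j=1}^{\hbar}(w-\alpha_j)$ having simple zeros and $R$ a polynomial of degree $\le\ell$. At a point where $f$ assumes some $\alpha_j$ with multiplicity $k$, the left side vanishes to order $k$ while $(f')^\ell$ vanishes to order $\ell(k-1)$, so $R$ must vanish there to order $\ell-k(\ell-1)$; summing these nonnegative local contributions against $\deg R\le\ell$, together with the analogous bookkeeping at the poles of $f$ and Picard's theorem (a meromorphic function omitting three values is constant), forces $\ell\in\{1,2\}$ and bounds $\hbar$ (namely $\hbar\le2$ when $\ell=1$ and $\hbar\le4$ when $\ell=2$), while $\hbar=0$ is excluded because it would make $f'$ an $\ell$-th root of a rational function, hence $f$ logarithmic and not meromorphic. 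It then remains to treat the finitely many surviving pairs $(\ell,\hbar)$: for $(1,1)$ and $(1,2)$ the ODE is separable and integrates, after partial fractions, to the exponential and M\"obius-in-exponential forms of Cases 1 and 2; for $(2,2)$ it integrates to the sine form of Case 3; and for $(2,3)$ and $(2,4)$ the equation is already of Briot--Bouquet/Weierstrass-$\wp$ type, so $f$ is by definition a transcendental meromorphic solution of \eqref{Eq1.1-1} or \eqref{Eq1.1-2}, with the constraint $m_1+m_2\le2$ coming from $\deg R\le\ell=2$. Re-expressing $f$ in terms of the original $A_0,A_1,a_i,\alpha_j$ completes the classification.

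I expect the main obstacle to be the bookkeeping of the third paragraph: extracting sharp bounds on $\ell$ and $\hbar$ means juggling multiplicity data at the $\alpha_j$-points of $f$, at the poles of $f$, and at the zeros of $R$ at once, and separately disposing of the borderline configurations (e.g.\ $\ell\ge3$, or $\ell=1$ with $\hbar=3$) where the crude counting is not decisive and one must appeal to Picard's theorem or a further growth estimate. Making the reduction $H(g_{z_1},\ldots,g_{z_n})=R\circ g$ fully rigorous — in particular the exclusion of a finite pole of $R$ — and importing the correct several-variables value-distribution inputs are the other delicate points.
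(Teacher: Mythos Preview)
Your overall strategy matches the paper's: substitute $u=f\circ g$ to get $H(g_{z_1},\ldots,g_{z_n})=R(g)$ with $R=P(f)/(f')^\ell$, argue $R$ is rational (the paper cites Chang--Li--Yang \cite[Theorem~4.1]{CLY} rather than sketching the growth comparison, but it is the same input), show $R$ is a polynomial of degree at most $\ell$, and then analyze the resulting ODE $(f')^\ell\prod(w-a_i)^{m_i}=A_0\prod(f-\alpha_j)$ case by case in $(\ell,\hbar)$.

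There is one genuine gap. Your proposed exclusion of a finite pole of $R$ via ``a short local computation at the poles of $f$ and at the zeros of $f'$'' does not work: the ODE $R(f')^\ell=P(f)$ only tells you that a pole $b_1$ of $R$ forces $f'(b_1)=0$, which is no contradiction. The paper's argument is on the $g$ side: since $R\circ g$ is entire, $g$ omits $b_1$, so $g-b_1=e^\beta$; substituting back into $H(g_{z_1},\ldots,g_{z_n})=R(g)$ and reapplying the rationality criterion to $\beta$ forces $g$ constant. You should plan to argue this way. Also be aware that the ``borderline'' cases need more than generic bookkeeping: for $\ell=2,\hbar=1$ the paper shows directly that $f$ is quadratic or non-meromorphic; for $\ell=2,\hbar=2$ with $m_1+m_2\ge1$ it invokes external results (Li~\cite{Li5}, Liao--Zhang~\cite{LZ}) to rule out transcendental solutions; and for $\ell\ge3,\hbar\le2$ it writes $f-\alpha_1=qe^\delta$ (resp.\ $(f-\alpha_1)/(f-\alpha_2)=re^\delta$) and computes to force $\delta$ constant. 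Your multiplicity count correctly shows that for $\ell\ge2$ every $\alpha_j$-point of $f$ is a zero of $R$, hence $f$ hits each $\alpha_j$ only finitely often, but turning this into ``$f$ rational'' when $\hbar\le2$ requires exactly these additional steps.
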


Discussions of meromorphic solutions $f$ to the ordinary differential equations (ODEs) \eqref{Eq1.1-1} and \eqref{Eq1.1-2} can be found in Bank-Kaufman \cite[Example 5]{BK1} and \cite{BK2}, and Ishizaki-Toda \cite[Section 3]{IT}, where the meromorphic $f$ are closely related to the Weierstrass $\wp$-function.

\begin{corollary}\label{Cor2}
Let $u(z)$ be a meromorphic solution in $\C^n$ to the partial differential equation
\begin{equation}\label{Eq1.2}
H(u_{z_1},u_{z_2},\ldots,u_{z_n})=P(u),
\end{equation}
where $H(z_1,z_2,\ldots,z_n)$ is a nontrivial homogenous polynomial of degree $\ell\hspace{0.2mm}(\geq1)$ in $\C^n$ and $P(w)$ is a polynomial of degree $\hbar$ in $\C$.
If either $\ell=1$ and $P(w)$ has a multiple zero, or $\ell=2$ and $\hbar\geq5$, or $\ell\geq3$ and $P(w)$ has all zeros distinct, then $u$ is pseudoprime.
\end{corollary}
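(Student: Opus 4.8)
The plan is to derive Corollary~\ref{Cor2} directly from Theorem~\ref{Thm1} by showing that, under each of the hypotheses listed, none of the five exceptional Cases in Theorem~\ref{Thm1} can occur, so the generic pseudoprimeness guaranteed by the theorem becomes unconditional. First I would recall that Theorem~\ref{Thm1} concerns $P(w)$ with all zeros distinct, so in the case $\ell = 1$ with $P(w)$ having a multiple zero, the theorem's hypotheses are not literally met; hence I expect this part of the corollary to require a short separate argument (or a reduction) rather than a pure invocation. The remaining parts — $\ell = 2$ with $\hbar \geq 5$, and $\ell \geq 3$ with $P$ having distinct zeros — fall squarely in the regime where Theorem~\ref{Thm1} applies with distinct zeros, and there one only needs to observe that Cases 1--5 all impose $\ell \leq 2$ and $\hbar \leq 4$. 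So for $\ell \geq 3$, or for $\ell = 2$ and $\hbar \geq 5$, no exceptional case is available, and $u$ must be pseudoprime.

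Concretely, I would organize the proof as follows. \textbf{Step 1.} Suppose $\ell = 2$ and $\hbar \geq 5$, with $P$ having distinct zeros (if $P$ has a multiple zero this needs to be addressed as in Step 3, but the stated hypothesis only singles out the multiple-zero subcase for $\ell=1$, so for $\ell = 2$ one is presumably meant to read $P$ as in Theorem~\ref{Thm1}). Apply Theorem~\ref{Thm1}: $u$ is generically pseudoprime, and the only possible obstructions are Cases 1--5. Inspecting the list, Case 1 has $\hbar = 1$, Case 2 has $\hbar = 2$, Case 3 has $\hbar = 2$, Case 4 has $\hbar = 3$, Case 5 has $\hbar = 4$; since $\hbar \geq 5$, none applies, so $u$ is pseudoprime. \textbf{Step 2.} Suppose $\ell \geq 3$ and $P$ has all zeros distinct. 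Again apply Theorem~\ref{Thm1}; each of Cases 1--5 forces $\ell \in \{1, 2\}$, contradicting $\ell \geq 3$, so $u$ is pseudoprime.

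\textbf{Step 3.} The remaining case $\ell = 1$ with $P$ having a multiple zero. Here the cleanest route is to revisit the mechanism by which Theorem~\ref{Thm1} produces its exceptional cases: when $\ell = 1$, equation \eqref{Eq1.2} reads $L(u_{z_1}, \ldots, u_{z_n}) = P(u)$ for a nontrivial linear form $L$, and composing with the left factor $f$ of $u = f \circ g$ one obtains an ODE of the shape $c\, f'(w) = P(f(w))$ along the characteristic directions of $L$, where $c \neq 0$ is a constant coming from the action of $L$ on $\nabla g$. If $f$ is transcendental and $P$ has a zero of multiplicity $\geq 2$, a standard Nevanlinna-theoretic count (e.g. Clunie's lemma or a direct comparison of $N(r, \cdot)$ and $T(r, \cdot)$ for $f'/P(f)$ versus $1/P(f)$, exactly the kind of estimate underlying the proof of Theorem~\ref{Thm1}) forces a contradiction, since $\deg P \geq 2$ then makes $P(f)$ have ``too many'' multiple values for a first-order algebraic ODE $c f' = P(f)$ to admit a transcendental meromorphic solution — in fact $c f' = P(f)$ with $\deg P \geq 2$ has no transcendental meromorphic solution at all when $P$ has a multiple zero, by separation of variables giving $w = c\int dw/P(w)$, whose inverse cannot be single-valued meromorphic on $\C$. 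Hence $g$ cannot be transcendental, i.e. $g$ is a polynomial, which is precisely pseudoprimeness of $u$.

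The main obstacle is Step~3: one must make the ODE reduction $L(u_{z_1},\dots,u_{z_n}) = P(u) \Rightarrow c\,f' = P(f)$ rigorous along characteristics (this is essentially done inside the proof of Theorem~\ref{Thm1}, so I would cite it) and then rule out transcendental meromorphic solutions of $c f' = P(f)$ when $\deg P \geq 2$ and $P$ has a repeated root. The latter is classical: if $P(w) = c_0\prod (w-b_j)^{n_j}$ with some $n_j \geq 2$, the equation has the general solution defined implicitly by $w + C = c \int \frac{dw}{P(w)}$, and the presence of a term $\frac{\text{const}}{(f-b_j)^{n_j-1}}$ (with $n_j - 1 \geq 1$) on the right side of the integrated relation is incompatible with $f$ being meromorphic and transcendental on all of $\C$ — alternatively one invokes the known classification (as in Bank--Kaufman \cite{BK1} or the references cited after Theorem~\ref{Thm1}) of first-order autonomous algebraic ODEs admitting transcendental meromorphic solutions, none of which has this form. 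Once transcendental $g$ is excluded in every listed case, pseudoprimeness follows immediately from the definition.
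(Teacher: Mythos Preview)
Your Step 2 is fine and matches the paper. Step 1, however, has a genuine gap: the hypothesis ``$\ell=2$ and $\hbar\geq 5$'' in the corollary carries \emph{no} assumption that the zeros of $P$ are simple, so you cannot just invoke Theorem~\ref{Thm1}. Your parenthetical dismissal (``one is presumably meant to read $P$ as in Theorem~\ref{Thm1}'') is incorrect. The paper handles this case by reproducing the growth estimate \eqref{Eq2.12}, namely $\hbar\,T(r,f)\leq(4+\epsilon)T(r,f)+O(\log r)$, which is indifferent to multiplicities and forces $f$ rational once $\hbar\geq 5$.

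More seriously, the ODE reduction in your Step 3 is wrong. Substituting $u=f(g)$ into $L(u_{z_1},\ldots,u_{z_n})=P(u)$ gives $f'(g)\cdot L(g_{z_1},\ldots,g_{z_n})=P(f(g))$, so $L(\nabla g)=P(f(g))/f'(g)$ is a function of $z$ through $g$; it is not a constant ``coming from the action of $L$ on $\nabla g$.'' What one actually knows (via \cite[Theorem~4.1]{CLY}, exactly as in \eqref{Eq2.1}--\eqref{Eq2.5}) is that $P(f(w))/f'(w)$ is rational, and after ruling out poles one gets $(w-a_1)^{m_1}f'(w)=A_0\prod_j (f(w)-\alpha_j)^{k_j}$ with $m_1\in\{0,1\}$. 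Your separation-of-variables/Bank--Kaufman sketch treats only the autonomous case $m_1=0$ and does not touch $m_1=1$. The paper instead argues by the number $\mu$ of distinct zeros of $P$: for $\mu=1$ it integrates explicitly and sees that meromorphy forces $m_1=0$, $k_1=2$, hence $f$ rational; for $\mu=2$ it writes $\frac{f-\alpha_1}{f-\alpha_2}=r(w)e^{\delta(w)}$ and shows $\delta$ must be constant (using $k_1+k_2\geq 3$); for $\mu\geq 3$ it reuses the Picard-value argument of Subcase~1.3. Your outline would need this non-autonomous analysis to be complete.
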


Theorem \ref{Thm1} and Corollary \ref{Cor2} supplement \cite{Li1,Sa2,Ha} on a broader perspective.

Next, we would like to know when equation \eqref{Eq1.1} has entire solutions in $\C^n$ and what these solutions look like: We are only able to describe this with success the general linear form \eqref{Eq1.3} and PDEs of super-Fermat/Waring's-problem form \eqref{Eq1.4}.
The last main results of this paper, Theorems \ref{Thm3} and \ref{Thm8}, with supplemental examples, are formulated as follows.

\begin{theorem}\label{Thm3}
Let $u(z)$ be an entire solution in $\C^n$ to the partial differential equation
\begin{equation}\label{Eq1.3}
(\rho_1u_{z_1}+\rho_2u_{z_2}+\cdots+\rho_nu_{z_n})^\ell=p(u),
\end{equation}
where $\ell\hspace{0.2mm}(\geq1)$ is an integer, $\rho_1,\rho_2,\ldots,\rho_n$ are constants, and $p(w)$ is a (generic) meromorphic function in $\C$.
Then, $p(w)$ must be a polynomial, say, of degree $\hbar$ in $\C$ and
\vskip 3pt\noindent{\bf Case 1.}
$\displaystyle{u(z)=\sqrt[\ell]{c_0}\hspace{0.2mm}(\sigma_1z_1+\sigma_2z_2+\cdots+\sigma_nz_n)+\Phi(z)}$ with $\hbar=0$ and $p(w)=c_0$;
\vskip 0pt\noindent{\bf Case 2.}
$\displaystyle{u(z)=\Bigl(\frac{\ell-\hbar}{\ell}\sqrt[\ell]{c_0}\hspace{0.2mm}(\sigma_1z_1+\sigma_2z_2+\cdots+\sigma_nz_n)+\Phi(z)\Bigr)^{\frac{\ell}{\ell-\hbar}}+a_1}$ with $\hbar<\ell$, $\frac{\ell}{\ell-\hbar}$ being an integer (such as $\ell=\hbar+1$, or $\ell=\hbar+2$ for even $\ell$), and $p(w)=c_0(w-a_1)^\hbar$;
\vskip 0pt\noindent{\bf Case 3.}
$\displaystyle{u(z)=\Phi(z)e^{\sqrt[\ell]{c_0}\hspace{0.2mm}(\sigma_1z_1+\sigma_2z_2+\cdots+\sigma_nz_n)}+a_1}$ with $\hbar=\ell$ and $p(w)=c_0(w-a_1)^\ell$;
\vskip 0pt\noindent{\bf Case 4.}
$\displaystyle{u(z)=\frac{a_1-a_2}{2}\cosh\bigl(\sqrt[\ell]{c_0}\hspace{0.2mm}(\sigma_1z_1+\sigma_2z_2+\cdots+\sigma_nz_n)+\Phi(z)\bigr)+\frac{a_1+a_2}{2}}$ with $\hbar=\ell$ even and $p(w)=c_0(w-a_1)^{\frac{\ell}{2}}(w-a_2)^{\frac{\ell}{2}}$.
\vskip 3pt\noindent Here, $a_1\neq a_2,c_0,\sigma_1,\sigma_2,\ldots,\sigma_n$ are constants and $\Phi(z)$ is an entire function in $\C^n$ such that $\rho_1\sigma_1+\rho_2\sigma_2+\cdots+\rho_n\sigma_n=1$ and $\rho_1\Phi_{z_1}+\rho_2\Phi_{z_2}+\cdots+\rho_n\Phi_{z_n}=0$.
\end{theorem}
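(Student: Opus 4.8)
The plan is to collapse \eqref{Eq1.3} to a one-variable autonomous ODE by the method of characteristics, classify the entire solutions of that ODE, and then reconstitute the $\C^n$ solution by letting the constant of integration vary entirely over the characteristic hyperplanes. Set $D:=\rho_1\partial_{z_1}+\cdots+\rho_n\partial_{z_n}$, so that \eqref{Eq1.3} is $(Du)^\ell=p(u)$. If $(\rho_1,\ldots,\rho_n)=0$ then $p(u)\equiv0$ forces $u$ constant (when $p\not\equiv0$); so assume $\rho\neq0$, pick $\sigma$ with $\rho\cdot\sigma=1$, and adjoin a basis $\tau^{(2)},\ldots,\tau^{(n)}$ of $\{\tau:\rho\cdot\tau=0\}$. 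In the invertible linear coordinates $w_1=\sigma\cdot z$, $w_j=\tau^{(j)}\cdot z$ ($2\le j\le n$) one has $D=\partial_{w_1}$ and entireness is preserved, so the PDE becomes $(u_{w_1})^\ell=p(u)$ with $w_2,\ldots,w_n$ treated as parameters; for each fixed value of these, $v(s):=u(s,w_2,\ldots,w_n)$ is entire on $\C$ and solves $(v')^\ell=p(v)$.

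First I would prove that $p$ is a polynomial. If $u$ is nonconstant then $Du\not\equiv0$, so some such $v$ is nonconstant; then $p\circ v=(v')^\ell$ is entire, and on that characteristic line the lemma on the logarithmic derivative gives $T(r,v')\le(1+o(1))\,T(r,v)$, hence $T(r,p\circ v)=O(T(r,v))$, which is impossible when $p$ is transcendental since then $T(r,p\circ v)/T(r,v)\to\infty$. So $p$ is rational, and a finite pole of $p$ would be omitted by $v$; the substitution $v=e^{\psi}$ (with that pole moved to $0$) then reduces $(v')^\ell=p(v)$, after a short computation, to a first-order ODE that forces $\psi'$ to have a pole unless $\psi'\equiv0$, i.e. unless $v$ is constant. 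Thus $p$ is a polynomial of some degree $\hbar$, with zeros $a_j$ of multiplicities $\mu_j$. A local index count at a point where $v=a_j$ with multiplicity $k$ gives $\ell(k-1)=\mu_j k$, so either $\mu_j\ge\ell$ and $v$ omits $a_j$, or $\mu_j<\ell$ and $v$ attains $a_j$ only with the fixed multiplicity $k_j=\ell/(\ell-\mu_j)$, necessarily a positive integer. Combining this with the defect relation $\sum_{a\in\Pe}\Theta(a)\le2$ (which gives $\Theta(\infty)=1$, so the total deficiency among finite values is at most $1$; in particular $v$ omits at most one value, and an omitted zero of $p$ is disposed of by the same exponential-substitution argument) and with a short divisibility argument for the $k_j$, one is left with exactly: $\hbar=0$; $\hbar<\ell$ with a single zero of multiplicity $\hbar$ and $\ell/(\ell-\hbar)\in\Z$; $\hbar=\ell$ with a single $\ell$-fold zero; $\hbar=\ell$ even with two distinct $(\ell/2)$-fold zeros. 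Everything else (for instance $\ell=2$ with $\hbar\in\{3,4\}$; $\hbar>\ell$; $\hbar=\ell$ odd with distinct zeros) is thereby excluded.

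In each surviving configuration $(v')^\ell=p(v)$ is separable and integrates explicitly: $v$ affine (Case 1); $v=W^{\ell/(\ell-\hbar)}$ with $W$ affine, the integrality of the exponent --- matching $k_1=\ell/(\ell-\hbar)$ --- making $v$ a genuine entire power (Case 2); $v-a_1=\Phi\,e^{c_0^{1/\ell}s}$, obtained by reducing $(v')^\ell=c_0(v-a_1)^\ell$ to $v'=c_0^{1/\ell}(v-a_1)$ through a connectedness argument for the branch of the root (Case 3); and $(v')^2=\zeta(v-a_1)(v-a_2)$ with $\zeta^{\ell/2}=c_0$, whose entire solutions are the stated $\cosh$ (Case 4). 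In each case the single constant of integration, read off from the given entire $u$ (say from $u$ and $u_{w_1}$ on $\{w_1=0\}$), is an entire function $\Phi$ of $w_2,\ldots,w_n$; back in the original coordinates it is an entire function on $\C^n$ with $\rho_1\Phi_{z_1}+\cdots+\rho_n\Phi_{z_n}=0$, while $\rho_1\sigma_1+\cdots+\rho_n\sigma_n=1$. Substituting $s=\sigma\cdot z$ then produces precisely the four displayed formulas; globally constant $u$ (the zeros of $p$) and the degenerate $p\equiv0$ appear inside these as limiting instances.

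The step I expect to be the main obstacle is the classification in the second paragraph: making the local index count, the defect relation --- including its equality case, which is exactly what produces the $\cosh$ of Case 4 --- and the divisibility of $\ell/(\ell-\mu_j)$ fit together without gaps, so that \emph{only} the four listed patterns of $(\ell,\hbar,p)$ carry a nonconstant entire solution, every other pattern being killed by a non-integral $\ell/(\ell-\mu_j)$, by an overspent finite-value budget, or by the auxiliary ODE for $\psi'$. By comparison, disposing of the rational/one-pole sub-case and checking that the constant of integration depends entirely on the transverse variables are routine.
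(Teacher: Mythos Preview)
Your proposal is correct and reaches the same conclusion as the paper, but by a genuinely different route. The paper works in $\C^n$ throughout: it invokes the several-variable composition result of Chang--Li--Yang \cite[Theorem~4.1]{CLY} to force $p$ rational, kills the possible pole by an exponential substitution in $\C^n$, uses Vitter's logarithmic-derivative lemma in $\C^n$ to get $\hbar\le\ell$, and then does the multiplicity/Picard analysis for $u$ directly in $\C^n$ to pin down the four admissible shapes of $p$; only \emph{after} that does it run the method of characteristics, case by case, to integrate. You instead front-load the characteristic reduction (the linear change of variables turning $D$ into $\partial_{w_1}$), so that everything afterwards is one-variable: a Clunie-type growth comparison for $T(r,p\circ v)$ replaces \cite{CLY}, the classical defect/ramification relation plus the local index identity $\ell(k-1)=\mu_j k$ replaces the $\C^n$ multiplicity count and the appeal to \cite{Vi}, and your exponential substitution is the one-variable shadow of the paper's. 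What your approach buys is that it is strictly more elementary --- no several-variable Nevanlinna machinery is needed --- and it makes transparent why the constant of integration is exactly an entire function annihilated by $D$. What the paper's approach buys is that its classification step is shorter and less delicate: once $\hbar\le\ell$ is known, the inequality $\ell/2\le m_j<\ell$ (paper's \eqref{Eq3.4}) together with a single Picard-type substitution immediately yields $s\le2$ and $m_1=m_2=\ell/2$ when $s=2$, without having to juggle the borderline of the defect relation or separately dispose of the polynomial-$v$ case. Your own assessment of where the work lies --- making the index count, the ramification budget, and the exponential reductions mesh without gaps --- is accurate; once that paragraph is written out carefully (including the polynomial sub-case and the $\mu_1>\ell$ reduction to $-e^{-\chi}=\alpha s+\beta$), the rest is indeed routine.
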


Theorem \ref{Thm3} generalizes Li-Saleeby \cite{LS} and Li \cite{Li2} with an easier/shorter proof.

\begin{example}\label{Exm4}
Set $\Phi(z):=\aleph\bigl(\frac{z_2}{\rho_2}-\frac{z_1}{\rho_1},\frac{z_3}{\rho_3}-\frac{z_2}{\rho_2},\ldots,
\frac{z_n}{\rho_n}-\frac{z_{n-1}}{\rho_{n-1}},\frac{z_1}{\rho_1}-\frac{z_n}{\rho_n}\bigr)$ by virtue of an entire function $\aleph(\eta)$ of $\eta\in\C^n$ to see $\rho_1\Phi_{z_1}+\rho_2\Phi_{z_2}+\cdots+\rho_n\Phi_{z_n}=0$.
\end{example}

\begin{example}\label{Exm5}
Set $\Phi_1(z):=\Xi\bigl(\frac{z_2}{\rho_2}-\frac{z_1}{\rho_1},\frac{z_3}{\rho_3}-\frac{z_1}{\rho_1},\ldots,\frac{z_n}{\rho_n}-\frac{z_1}{\rho_1}\bigr)$ to be an entire function of $z\in\C^n$ (by virtue of an entire function $\Xi(\xi)$ of $\xi\in\C^{n-1}$) to see $\rho_1{\Phi_1}_{z_1}+\rho_2{\Phi_1}_{z_2}+\cdots+\rho_n{\Phi_1}_{z_n}=0$.
Likewise, set $\Phi_2(z),\Phi_3(z),\ldots,\Phi_n(z)$ similarly to see $\rho_1{\Phi_j}_{z_1}+\rho_2{\Phi_j}_{z_2}+\cdots+\rho_n{\Phi_j}_{z_n}=0$ for $j=1,2,\ldots,n$.
$\Phi(z)$ can be generated as linear combinations of $\Phi_1,\Phi_2,\ldots,\Phi_n$.
\end{example}

\begin{example}\label{Exm6}
When $n=2k$ is even, set $\Phi(z):=\Upsilon\bigl(\frac{z_2}{\rho_2}-\frac{z_1}{\rho_1},\frac{z_4}{\rho_4}-\frac{z_3}{\rho_3},\ldots,
\frac{z_{2k}}{\rho_{2k}}-\frac{z_{2k-1}}{\rho_{2k-1}}\bigr)$ to see $\rho_1\Phi_{z_1}+\rho_2\Phi_{z_2}+\cdots+\rho_n\Phi_{z_n}=0$ with $\Upsilon(\theta)$ an entire function of $\theta\in\C^k$.
Apparently, other pairwise distinct rearrangements and their linear combinations generate new $\Phi(z)$.
\end{example}

\begin{example}\label{Exm7}
Set $\Phi(z):=f\bigl((n-1)\frac{z_1}{\rho_1}-\frac{z_2}{\rho_2}-\frac{z_3}{\rho_3}-\cdots-\frac{z_n}{\rho_n}\bigr)$ through an entire function $f(w)$ in $\C$ to see $\rho_1\Phi_{z_1}+\rho_2\Phi_{z_2}+\cdots+\rho_n\Phi_{z_n}=0$.
\end{example}

Finally, we describe entire solutions to the partial differential equation
\begin{equation}\label{Eq1.4}
u^\ell_{z_1}+u^\ell_{z_2}+\cdots+u^\ell_{z_n}=u^\hbar,
\end{equation}
which is considered as the most important problem studied in this paper.

When $\ell=2$ and $\hbar=0$, then \eqref{Eq1.4} is a complex $n$-dimensional eikonal equation.
Caffarelli-Crandall \cite{CC} found that linear functions are the only possible global solutions to \eqref{Eq1.4} in $\R^n$ in this case, motivated by an earlier work of Khavinson \cite{Kh} in $\C^2$; see \cite[Remark 2.3]{CC}.
Hemmati \cite{He} and Saleeby \cite{Sa1} provided different proofs of \cite{Kh}.
In $\C^n$ when $n\geq3$, as first described by Johnsson \cite{Jo}, there are indeed nonlinear complex analytic solutions to eikonal equations.
We shall provide more examples in this regard to supplement those well-known works.

Equation \eqref{Eq1.4} for general $u^\hbar$, particularly, $u$ or $u^\ell$, formally relates to the Waring's problem or the super-Fermat problem.
(One should note that what we are interested in here is different from, in a sense, opposite to, those original fundamental issues in number theory.)

\begin{theorem}\label{Thm8}
Assume that $u(z)$ is an entire solution to the partial differential equation \eqref{Eq1.4} in $\C^n$ for integers $\ell\geq1$ and $\hbar\geq0$ with $0\leq\hbar\leq\ell$.
Then, one has
\vskip 3pt\noindent{\bf Case 1.}
$\displaystyle{u(z)=\sigma_1z_1+\sigma_2z_2+\cdots+\sigma_nz_n+\Phi(z)}$ with $\hbar=0$;
\vskip 0pt\noindent{\bf Case 2.}
$\displaystyle{u(z)=\Bigl(\frac{z_1}{2}+c_1\Bigr)^2+\Bigl(\frac{z_2}{2}+c_2\Bigr)^2+\cdots+\Bigl(\frac{z_n}{2}+c_n\Bigr)^2}$ with $\hbar=1$ and $\ell=2$;
\vskip 0pt\noindent{\bf Case 3.}
$\displaystyle{u(z)=\Bigl(\frac{\ell-\hbar}{\ell}(\sigma_1z_1+\sigma_2z_2+\cdots+\sigma_nz_n)+\Phi(z)\Bigr)^{\frac{\ell}{\ell-\hbar}}}$ with $\hbar<\ell$ and $\frac{\ell}{\ell-\hbar}\in\N$;
\vskip 0pt\noindent{\bf Case 4.}
$\displaystyle{u(z)=\Psi(z)e^{\sigma_1z_1+\sigma_2z_2+\cdots+\sigma_nz_n}}$ with $\hbar=\ell$.
\vskip 3pt\noindent Here, $c_1,c_2,\ldots,c_n,\sigma_1,\sigma_2,\ldots,\sigma_n$ are constants and $\Phi(z),\Psi(z)$ are entire functions in $\C^n$ with $\sum^n_{j=1}\sigma^\ell_j=1$, $\sum^n_{j=1}\sum^\ell_{\iota=1}\sigma^{\ell-\iota}_j\Phi^\iota_{z_j}=0$ and $\sum^n_{j=1}\sum^\ell_{\iota=1}(\sigma_j\Psi)^{\ell-\iota}\Psi^\iota_{z_j}=0$.
\end{theorem}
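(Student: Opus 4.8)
The plan is to reduce equation \eqref{Eq1.4} to a one-dimensional ODE situation by means of the characteristic structure already exploited in Theorem \ref{Thm3}, and then to invoke Theorem \ref{Thm3} directly. First I would observe that \eqref{Eq1.4} is a special instance of the Waring's-problem form treated before, so one may look for a solution via the ansatz that the ``gradient direction'' is (locally) constant: write $u_{z_j}=\sigma_j\,v$ for a common entire function $v$ and constants $\sigma_j$ with $\sum_{j=1}^n\sigma_j^\ell=1$, which forces $v^\ell=u^\hbar$, i.e. $v=u^{\hbar/\ell}$ along a branch. Substituting back, $u$ satisfies the linear first-order system $u_{z_j}=\sigma_j\,u^{\hbar/\ell}$, which is exactly of the form \eqref{Eq1.3} with $\rho_j$ chosen dual to $\sigma_j$ and $p(w)=w^\hbar$; the four cases of Theorem \ref{Thm3} ($\hbar=0$; $\hbar<\ell$ with $\tfrac{\ell}{\ell-\hbar}\in\N$; $\hbar=\ell$; $\hbar=\ell$ even) then translate verbatim into Cases 1, 3, 4 of the present statement, with $a_1=0$ absorbing the shift and $\Phi,\Psi$ inheriting the side conditions $\rho_1\Phi_{z_1}+\cdots+\rho_n\Phi_{z_n}=0$ rewritten intrinsically as $\sum_j\sum_\iota\sigma_j^{\ell-\iota}\Phi_{z_j}^\iota=0$ and its multiplicative analogue for $\Psi$.

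The genuine work is to show that \emph{every} entire solution arises this way — i.e. that the characteristic direction really is constant. Here I would argue as follows. Consider the $n$ entire functions $u_{z_1},\dots,u_{z_n}$; equation \eqref{Eq1.4} says they lie on the affine-algebraic hypersurface $\{\zeta_1^\ell+\cdots+\zeta_n^\ell=u^\hbar\}$. Differentiating \eqref{Eq1.4} with respect to each $z_k$ gives $\ell\sum_j u_{z_j}^{\ell-1}u_{z_jz_k}=\hbar u^{\hbar-1}u_{z_k}$, a linear relation among the second derivatives. The strategy is to combine these with the commutation relations $u_{z_jz_k}=u_{z_kz_j}$ to deduce that the vector $(u_{z_1}^{\ell-1},\dots,u_{z_n}^{\ell-1})$ is, up to scalar, gradient-like, and ultimately that $u_{z_j}/u_{z_k}$ is constant wherever defined. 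When $\hbar=0$ this is immediate: each $u_{z_jz_k}$-combination vanishes, the $u_{z_j}$ are ``constant along characteristics,'' and one peels off Case 1 together with the freedom $\Phi$. When $0<\hbar\le\ell$ the cleanest route is to set $w:=u^{1-\hbar/\ell}$ (respectively $w:=\log u$ when $\hbar=\ell$); a short computation shows $w$ satisfies the purely \eqref{Eq1.3}-type equation $(\sum\rho_j w_{z_j})^\ell=$ const, putting us back in the scope of Theorem \ref{Thm3}. Case 2 ($\hbar=1,\ell=2$) is the one exception, where $\tfrac{\ell}{\ell-\hbar}=2$ is still an integer so it is formally a sub-case of Case 3, but the explicit completed-square form $u=\sum_j(\tfrac{z_j}{2}+c_j)^2$ deserves separate listing; I would verify it by direct substitution and note $\sigma_j=\tfrac12$ is not forced since $\sum\sigma_j^2=1$ fails for $n\neq4$ — instead each coordinate contributes independently, which is the degenerate feature worth flagging.

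I would organize the write-up in the order: (i) verify all four displayed families are solutions by substitution, checking the side conditions on $\Phi,\Psi$; (ii) prove the converse for $\hbar=0$ via the vanishing of the mixed-derivative relations; (iii) reduce $0<\hbar<\ell$ to Theorem \ref{Thm3} through the substitution $w=u^{1-\hbar/\ell}$, handling the integrality requirement $\tfrac{\ell}{\ell-\hbar}\in\N$ and the sub-case $\hbar=1,\ell=2$ explicitly; (iv) reduce $\hbar=\ell$ to Theorem \ref{Thm3} via $w=\log u$ on a simply connected patch, then globalize using that $u$ is entire and zero-free (forced by Case 4's form) to get the exponential representation, distinguishing the ``even $\hbar$'' $\cosh$ refinement. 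The main obstacle I anticipate is step (ii)/(iv): rigorously passing from the differentiated equations to the statement that the characteristic ratios $\sigma_j$ are \emph{globally} constant, as opposed to locally constant on the open set where the $u_{z_j}$ are nonzero; this needs a connectedness/analytic-continuation argument on $\C^n$ plus care that the branch of $u^{\hbar/\ell}$ (or $\log u$) can be chosen consistently, which is where the hypothesis $0\le\hbar\le\ell$ (ensuring the relevant exponent is a nonnegative integer after the substitution) is really used.
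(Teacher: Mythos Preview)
Your central reduction does not work. When you set $w:=u^{1-\hbar/\ell}$ (or $w:=\log u$ when $\hbar=\ell$), the chain rule gives $u_{z_j}=\tfrac{\ell}{\ell-\hbar}\,u^{\hbar/\ell}w_{z_j}$, and substituting into \eqref{Eq1.4} yields
\[
\sum_{j=1}^n w_{z_j}^{\ell}=\Bigl(\tfrac{\ell-\hbar}{\ell}\Bigr)^{\ell},
\]
which is again equation \eqref{Eq1.4} with $\hbar=0$, \emph{not} an equation of the form \eqref{Eq1.3}. The operator in \eqref{Eq1.3} is $\bigl(\sum_j\rho_j u_{z_j}\bigr)^\ell$, the $\ell$-th power of a single linear combination; equation \eqref{Eq1.4} is $\sum_j u_{z_j}^\ell$, a sum of $\ell$-th powers. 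No substitution of the type you propose converts one into the other, so Theorem \ref{Thm3} cannot be invoked.

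The preceding step --- the ansatz $u_{z_j}=\sigma_j v$ with constant $\sigma_j$ --- is also false in general, and not only in Case~2. In Case~1, for instance, $u_{z_j}=\sigma_j+\Phi_{z_j}$, and Example \ref{Exm9} exhibits a solution in $\C^3$ where the $\Phi_{z_j}$ are genuinely nonproportional entire functions; likewise in Cases~3 and~4 the freedom in $\Phi,\Psi$ destroys any fixed ratio among the $u_{z_j}$. So ``the characteristic direction is globally constant'' is simply not a feature of the solutions being described, and you cannot hope to prove it. What \emph{is} true is that $Du$ is constant along each individual characteristic curve when $\hbar=0$ (this is \eqref{Eq4.2} with $F_u=F_y=0$), but the direction varies from curve to curve, and that variation is precisely what encodes $\Phi$.

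The paper's argument is to write out the full nonlinear characteristic system \eqref{Eq4.1}--\eqref{Eq4.3} for $F(Du,u,z)=\sum_j u_{z_j}^\ell-u^\hbar$ and solve the resulting coupled ODEs in $\tau$ explicitly in each of the four regimes $\hbar=0$, $\hbar=1$, $1<\hbar<\ell$, $\hbar=\ell$; the forms in the statement emerge after re-expressing $\tau$ in terms of $z$ and absorbing the initial data into $\Phi$ or $\Psi$. Case~2 requires an additional pass: the first integration produces \eqref{Eq4.10} with an undetermined entire $\Lambda$, and one must feed $\Lambda$ back through the characteristic equations to force it to be affine.
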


It is easy to see from the proof that $u^{\ell_1}_{z_1}+u^{\ell_2}_{z_2}+\cdots+u^{\ell_n}_{z_n}=1$ has entire solutions as those in {\bf Case 1} above, where $\ell_1,\ell_2,\ldots,\ell_n\geq1$ are integers, not necessarily the same.

\begin{example}\label{Exm9}
Let $u(z):=\frac{2}{7}z_1+\frac{3}{7}z_2+\frac{6}{7}z_3+f(\varpi)$ be entire in $\C^3$ with $f(w)$ entire in $\C$ and
$\varpi:=\frac{1}{2}\bigl(\frac{12-21i}{13}\bigr)^2z^2_1+\frac{1}{2}\bigl(\frac{18+14i}{13}\bigr)^2z^2_2+\frac{1}{2}z^2_3
+\frac{12-21i}{13}\frac{18+14i}{13}z_1z_2-\frac{12-21i}{13}z_1z_3-\frac{18+14i}{13}z_2z_3$.
Then, routine calculations lead to
\begin{equation*}
\left\{\begin{array}{ll}
u_{z_1}(z)=\frac{2}{7}+\frac{12-21i}{13}\bigl(\frac{12-21i}{13}z_1+\frac{18+14i}{13}z_2-z_3\bigr)f'(\varpi) \medskip\\
u_{z_2}(z)=\frac{3}{7}+\frac{18+14i}{13}\bigl(\frac{12-21i}{13}z_1+\frac{18+14i}{13}z_2-z_3\bigr)f'(\varpi) \medskip\\
u_{z_3}(z)=\frac{6}{7}-\bigl(\frac{12-21i}{13}z_1+\frac{18+14i}{13}z_2-z_3\bigr)f'(\varpi)
\end{array}\right.
\end{equation*}
so that $u^2_{z_1}+u^2_{z_2}+u^2_{z_3}=1$.
\end{example}

\begin{example}\label{Exm10}
Let $u(x):=\frac{1}{2}x_1+\frac{2}{3}x_2+\frac{5}{6}x_3+f(\tilde{y})$ for $\tilde{y}:=ax_1+bx_2-x_3$ be differentiable in $\R^3$ with $f(y)$ differentiable in $\R$ to see $u^3_{x_1}+u^3_{x_2}+u^3_{x_3}=1$, where $a=-\frac{16}{9}b+\frac{25}{9}$ and $b$ is the unique real root of the cubic polynomial $91\kappa^3-100\kappa^2+80\kappa-152=0$.
\end{example}

\begin{example}\label{Exm11}
Let $u(z):=f(\varpi_1)\exp\bigl(\frac{2}{7}z_1+\frac{3}{7}z_2+\frac{6}{7}z_3\bigr)$ for $\varpi_1:=\frac{12-21i}{13}z_1+\frac{18+14i}{13}z_2-z_3$ and $\tilde{u}(z):=\tilde{f}(\varpi_2)\exp\bigl(\frac{1}{2}z_1+\frac{2}{3}z_2+\frac{5}{6}z_3\bigr)$ for $\varpi_2:=az_1+bz_2-z_3$ be entire functions in $\C^3$, with $a=-\frac{16}{9}b+\frac{25}{9}$ as above and $b$ a root (real or complex) of $91\kappa^3-100\kappa^2+80\kappa-152=0$.
Then, we have $u^2_{z_1}+u^2_{z_2}+u^2_{z_3}=u^2$ and $\tilde{u}^3_{z_1}+\tilde{u}^3_{z_2}+\tilde{u}^3_{z_3}=u^3$ respectively.
\end{example}

\begin{example}\label{Exm12}
Let $u(z):=\frac{3}{2}z_1-z_2+\frac{3}{2}z_3-\frac{4}{3}z_4+\frac{3}{2}z_5-\frac{5}{3}z_6+\frac{3}{2}z_7+f(\varpi_1,\varpi_2)$ for $\varpi_1:=z_1+iz_3-z_5-iz_7$ and $\varpi_2:=az_2+bz_4-z_6$ be entire in $\C^7$ to observe
\begin{equation*}
u^2_{z_1}+u^3_{z_2}+u^2_{z_3}+u^3_{z_4}+u^2_{z_5}+u^3_{z_6}+u^2_{z_7}=1,
\end{equation*}
where $a,b$ are constants as above and $f(w_1,w_2)$ is an entire function in $\C^2$.
\end{example}

Example \ref{Exm9} provides a `nonlinear' extension to the one from Johnsson \cite{Jo}; see also Li \cite{Li1}.
Example \ref{Exm10} is relevant to Caffarelli-Crandall \cite{CC}, where it is shown $u^2_{x_1}+u^2_{x_2}+\cdots+u^2_{x_n}=1$ has only linear solutions in $\R^n$.
Example \ref{Exm11} is related to Han \cite{Ha} and Li-Ye \cite{LYe}, where it is shown all complex analytic solutions to $u^\ell_{z_1}+u^\ell_{z_2}=u^\ell$ are purely exponential in $\C^2$ for $\ell\geq2$.
Example \ref{Exm12} is formally related to the generalized Fermat equation, for which one can consult Bennett-Mih\u{a}ilescu-Siksek \cite{BMS} for further information (in number theory), while \cite{Li3,Li4,Li6,LYe} described complex analytic solutions to formally related PDEs in $\C^2$.

The remaining of the paper is as follows: Section \ref{PT1CO2} is devoted to the proofs of Theorem \ref{Thm1} and Corollary \ref{Cor2}, Section \ref{PT3} is devoted to that of Theorem \ref{Thm3}, and, after a brief review of the notion of characteristics, Section \ref{PT8} is devoted to that of Theorem \ref{Thm8}.

\section{Proofs of Theorem \ref{Thm1} and Corollary \ref{Cor2}}\label{PT1CO2} 
\begin{proof}[Proof of Theorem \ref{Thm1}]
Let $u(z)=f(g(z))$ be a meromorphic solution to equation \eqref{Eq1.1} in $\C^n$ for entire $g(z):\C^n\to\C$ and meromorphic $f(w):\C\to\Pe$.
Note if $g$ either is a polynomial or is meromorphic, then $u$ is pseudoprime by definition.
Seeing this, assume subsequently $g$ is a transcendental entire function in $\C^n$.
Substitute $u(z)=f(g(z))$ into \eqref{Eq1.1} to have
\begin{equation}\label{Eq2.1}
H(g_{z_1},g_{z_2},\ldots,g_{z_n})=h(g)
\end{equation}
with $h(w):=P(f(w))/(f^\prime(w))^\ell:\C\to\Pe$.
$h$ is rational by Chang-Li-Yang \cite[Theorem 4.1]{CLY}, as $H$ is a polynomial; that is, $P(f)/(f^\prime)^\ell$ is a rational function, say,
\begin{equation}\label{Eq2.2}
h(w)=\frac{P(f)}{(f^\prime)^\ell}(w)=c_0\frac{(w-a_1)^{m_1}(w-a_2)^{m_2}\cdots(w-a_s)^{m_s}}{(w-b_1)^{l_1}(w-b_2)^{l_2}\cdots(w-b_t)^{l_t}}
\end{equation}
for pairwise distinct complex numbers $a_1,a_2,\ldots,a_s,b_1,b_2,\ldots,b_t$, a constant $c_0\neq0$, and integers $m_1,m_2,\ldots,m_s,l_1,l_2,\ldots,l_t\geq0$.

In view of the proof of Han \cite[Pages 282-283]{Ha}, $t=0$ follows.
For completeness, we sketch a proof here.
In fact, combine \eqref{Eq2.1} and \eqref{Eq2.2} to have
\begin{equation}\label{Eq2.3}
H(g_{z_1},g_{z_2},\ldots,g_{z_n})=c_0\frac{(g-a_1)^{m_1}(g-a_2)^{m_2}\cdots(g-a_s)^{m_s}}{(g-b_1)^{l_1}(g-b_2)^{l_2}\cdots(g-b_t)^{l_t}}.
\end{equation}
As the left-hand side is analytic in $\C^n$, $t$ is at most $1$, in which case $g$ assumes its only possible finite Picard value.
Without loss of generality, suppose $t=1$ and $g(z)-b_1=e^{\beta(z)}$ for an entire function $\beta(z):\C^n\to\C$; then, substitute this into \eqref{Eq2.3} to deduce
\begin{equation*}
H(\beta_{z_1},\beta_{z_2},\ldots,\beta_{z_n})=c_0\frac{(e^\beta+b_1-a_1)^{m_1}\cdots(e^\beta+b_1-a_s)^{m_s}}{e^{(\ell+l_1)\beta}},
\end{equation*}
and an application of \cite[Theorem 4.1]{CLY} leads to $g(z)$ a constant.
So, $t=0$.

Now, equation \eqref{Eq2.3} reads
\begin{equation}\label{Eq2.4}
H(g_{z_1},g_{z_2},\ldots,g_{z_n})=c_0(g-a_1)^{m_1}(g-a_2)^{m_2}\cdots(g-a_s)^{m_s}.
\end{equation}
Following the proof of Li \cite[Page 135]{Li2}, we derive from \eqref{Eq2.4} that
\begin{equation*}
m_1+m_2+\cdots+m_s\leq\ell
\end{equation*}
as a straightforward application of the {\sl logarithmic derivative lemma} by Vitter \cite{Vi}.
So, \eqref{Eq2.1}, \eqref{Eq2.2} and \eqref{Eq2.4} combined leads to an ordinary differential equation
\begin{equation}\label{Eq2.5}
(f^\prime)^\ell(w)=\frac{A_0}{(w-a_1)^{m_1}\cdots(w-a_s)^{m_s}}(f(w)-\alpha_1)\cdots(f(w)-\alpha_\hbar)
\end{equation}
of $f(w):\C\to\Pe$ for a constant $A_0\neq0$ and pairwise distinct complex numbers $\alpha_1,\alpha_2,\ldots,\alpha_\hbar$, where  $P(w)=\alpha_0(w-\alpha_1)(w-\alpha_2)\cdots(w-\alpha_\hbar)$ for a constant $\alpha_0\neq0$.

Below, we consider three different cases and their associated subcases.

\vskip 2pt
{\bf Case 1.} $\ell=1$.
In this case, one has $s\leq1$ and accordingly $m_1\leq1$.

\vskip 2pt
{\bf Subcase 1.1.} $\hbar=1$.
In this subcase, equation \eqref{Eq2.5} reads
\begin{equation}\label{Eq2.6}
\frac{f^\prime(w)}{f(w)-\alpha_1}=A_0\hspace{2mm}\mathrm{or}\hspace{2mm}\frac{f^\prime(w)}{f(w)-\alpha_1}=\frac{A_0}{w-a_1}.
\end{equation}
Easy calculations yield
\begin{equation*}
f(w)=A_1e^{A_0w}+\alpha_1\hspace{2mm}\mathrm{or}\hspace{2mm}f(w)=A_1(w-a_1)^{A_0}+\alpha_1,
\end{equation*}
where $A_0\cdot A_1\neq0$ are constants with $A_0\in\Z$ for the latter subcase.
Notice the second subcase implies that $u$ is pseudoprime, since $f$ is rational here.

\vskip 2pt
{\bf Subcase 1.2.} $\hbar=2$.
In this subcase, equation \eqref{Eq2.5} reads
\begin{equation}\label{Eq2.7}
\frac{f^\prime(w)}{(f(w)-\alpha_1)(f(w)-\alpha_2)}=A_0\hspace{2mm}\mathrm{or}\hspace{2mm}\frac{f^\prime(w)}{(f(w)-\alpha_1)(f(w)-\alpha_2)}=\frac{A_0}{w-a_1}.
\end{equation}
Routine calculations lead to
\begin{equation*}
f(w)=\frac{\alpha_2A_1e^{A_0(\alpha_1-\alpha_2)w}-\alpha_1}{A_1e^{A_0(\alpha_1-\alpha_2)w}-1}\hspace{2mm}\mathrm{or}\hspace{2mm}
f(w)=\frac{\alpha_2A_1(w-a_1)^{A_0(\alpha_1-\alpha_2)}-\alpha_1}{A_1(w-a_1)^{A_0(\alpha_1-\alpha_2)}-1},
\end{equation*}
where $A_0\cdot A_1\neq0$ are constants with $A_0(\alpha_1-\alpha_2)\in\Z$ for the latter subcase.
Note the second subcase again implies that $u$ is pseudoprime, as $f$ is rational.

\vskip 2pt
{\bf Subcase 1.3.} $\hbar\geq3$.
In this subcase, with $m_1\leq1$, equation \eqref{Eq2.5} reads
\begin{equation*}
f^\prime(w)=\frac{A_0}{(w-a_1)^{m_1}}(f(w)-\alpha_1)(f(w)-\alpha_2)\cdots(f(w)-\alpha_\hbar).
\end{equation*}
Now, take $w_j$ to be a root of $f(w)-\alpha_j=0$ for $j=1,2,\ldots,\hbar$; a comparison of its multiplicity on both sides implies, say, $w_1=a_1$ (at most).
When $m_1=0$, $f$ is a constant, for it has $\hbar\hspace{0.2mm}(\geq3)$ distinct finite Picard values; when $m_1=1$ but $\hbar\geq4$, the same occurs.
Finally, when $m_1=1$ and $\hbar=3$, then $\frac{f(w)-\alpha_2}{f(w)-\alpha_3}=e^{\gamma(w)}$ for an entire function $\gamma(w):\C\to\C$; so, $f(w)=\frac{\alpha_3e^{\gamma(w)}-\alpha_2}{e^{\gamma(w)}-1}$.
Hence, it is easily seen from the preceding equation that
\begin{equation*}
\frac{(\alpha_2-\alpha_3)\gamma^\prime e^\gamma}{(e^\gamma-1)^2}
=\frac{A_0}{w-a_1}\frac{(\alpha_3-\alpha_2)^2e^\gamma((\alpha_3-\alpha_1)e^\gamma-(\alpha_2-\alpha_1))}{(e^\gamma-1)^3},
\end{equation*}
or equivalently,
\begin{equation*}
\frac{(w-a_1)\gamma^\prime(w)}{A_0(\alpha_3-\alpha_1)(\alpha_3-\alpha_2)}=-\frac{e^{\gamma(w)}-\frac{\alpha_2-\alpha_1}{\alpha_3-\alpha_1}}{e^{\gamma(w)}-1},
\end{equation*}
which leads to $\gamma$, and correspondingly $f$, a constant since $\frac{\alpha_2-\alpha_1}{\alpha_3-\alpha_1}\neq1$.
Thus, $u$ is pseudoprime, as $f$ is a constant when $g$ is transcendental, so that $g$ must be a polynomial.

\vskip 2pt
{\bf Case 2.} $\ell=2$.
In this case, one has $s\leq2$ and accordingly $m_1+m_2\leq2$.

\vskip 2pt
{\bf Subcase 2.1.} $\hbar=1$.
In this subcase, equation \eqref{Eq2.5} reads
\begin{equation*}
(w-a_1)^{m_1}(w-a_2)^{m_2}(f^\prime)^2(w)=A_0(f(w)-\alpha_1).
\end{equation*}
Taking derivative on both sides of the above equation yields
\begin{equation*}
f^{\prime\prime}(w)+\frac{1}{2}\Bigl(\frac{m_1}{w-a_1}+\frac{m_2}{w-a_2}\Bigr)f^\prime(w)=\frac{A_0}{2}\frac{1}{(w-a_1)^{m_1}(w-a_2)^{m_2}}.
\end{equation*}
When $m_1=m_2=0$, one sees that $f$ is a quadratic polynomial, and hence, $u$ is pseudoprime.
When $1\leq m_1+m_2\leq2$, one derives from routine calculations that
\begin{equation}\label{Eq2.8}
f^\prime(w)=\frac{1}{(w-a_1)^{\frac{m_1}{2}}(w-a_2)^{\frac{m_2}{2}}}\biggl(\frac{A_0}{2}\int\frac{1}{{(w-a_1)^{\frac{m_1}{2}}(w-a_2)^{\frac{m_2}{2}}}}dw+C\biggr),
\end{equation}
which does not allow any meromorphic solution $f^\prime$, and accordingly $f$, in $\C$.

\vskip 2pt
{\bf Subcase 2.2.} $\hbar=2$.
In this subcase, equation \eqref{Eq2.5} can be rewritten as
\begin{equation}\label{Eq2.9}
F^2(w)-\frac{1}{A_0}(w-a_1)^{m_1}(w-a_2)^{m_2}(F^\prime)^2(w)=\Bigl(\frac{\alpha_1-\alpha_2}{2}\Bigr)^2
\end{equation}
for $F(w):=f(w)-\frac{\alpha_1+\alpha_2}{2}$.
In view of Theorem 1 (in a general domain $\D\subseteq\C$) and Example 2 of Li \cite{Li5} (see also Liao-Zhang \cite[Theorem 3.1]{LZ}), \eqref{Eq2.9} has no transcendental meromorphic solution $F$ in $\C$ when $1\leq m_1+m_2\leq2$, so that $u$ is pseudoprime as $f$ may be rational.
When $m_1=m_2=0$, we get $F(w)=\frac{\alpha_1-\alpha_2}{2}\sin\bigl(\sqrt{-A_0}\hspace{0.2mm}w+A_1\bigr)$ by Liao-Tang \cite[Theorem 1]{LT}, so that
\begin{equation*}
f(w)=\frac{\alpha_1+\alpha_2}{2}+\frac{\alpha_1-\alpha_2}{2}\sin\bigl(\sqrt{-A_0}\hspace{0.2mm}w+A_1\bigr),
\end{equation*}
where $A_0\neq0,A_1$ are constants.

\vskip 2pt
{\bf Subcase 2.3.} $\hbar=3$.
In this subcase, equation \eqref{Eq2.5} reads
\begin{equation}\label{Eq2.10}
(w-a_1)^{m_1}(w-a_2)^{m_2}(f^\prime)^2(w)=A_0(f(w)-\alpha_1)(f(w)-\alpha_2)(f(w)-\alpha_3).
\end{equation}
When $m_1=m_2=0$, the Weierstrass $\wp$-function is a transcendental meromorphic solution for suitable constants $A_0\cdot\alpha_1\cdot\alpha_2\cdot\alpha_3\neq0$.
When $m_1=m_2=1$ and $a_1=-a_2=2$, Bank-Kaufman \cite[Section 5]{BK1} constructed a solution, as a composite of $\wp$ and {\sl fractional logarithm}, to equation \eqref{Eq2.10}, and they \cite[Theorem]{BK2} further observed transcendental meromorphic solutions to \eqref{Eq2.10} with nonconstant rational coefficients satisfy $T(r,f)=O\bigl(\log^2r\bigr)$ and $T(r,f)\neq o\bigl(\log^2r\bigr)$.

\vskip 2pt
{\bf Subcase 2.4.} $\hbar=4$.
In this subcase, equation \eqref{Eq2.5} reads
\begin{equation}\label{Eq2.11}
(w-a_1)^{m_1}(w-a_2)^{m_2}(f^\prime)^2(w)=A_0(f(w)-\alpha_1)(f(w)-\alpha_2)(f(w)-\alpha_3)(f(w)-\alpha_4).
\end{equation}
Ishizaki-Toda \cite[Section 3]{IT} provided a detailed discussion on this equation.
Note once \eqref{Eq2.11} has a transcendental meromorphic solution, it will then have at least four such solutions.

\vskip 2pt
{\bf Subcase 2.5.} $\hbar\geq5$.
In this subcase, equation \eqref{Eq2.5} reads
\begin{equation*}
(f^\prime)^2(w)=\frac{A_0}{(w-a_1)^{m_1}(w-a_2)^{m_2}}(f(w)-\alpha_1)(f(w)-\alpha_2)\cdots(f(w)-\alpha_\hbar).
\end{equation*}
By virtue of Hayman \cite[Lemma 2.3 and Theorem 3.1]{Ha1}, one has
\begin{equation}\label{Eq2.12}
\begin{split}
\hbar T(r,f)&=T(r,(f-\alpha_1)(f-\alpha_2)\cdots(f-\alpha_\hbar))+O(1)\\
&=T\bigl(r,(w-a_1)^{m_1}(w-a_2)^{m_2}(f^\prime)^2\bigr)+O(1)\\
&\leq2T(r,f^\prime)+O(\log r)\leq(4+\epsilon)T(r,f)+O(\log r)
\end{split}
\end{equation}
for all $r$ outside of a possible set of finite Lebesgue measure with $\epsilon>0$ arbitrarily small, which implies that $f$ is rational, and therefore, $u$ is pseudoprime.

\vskip 2pt
{\bf Case 3.} $\ell\geq3$.
In this case, $s\leq\ell$ and accordingly $m_1+m_2+\cdots+m_\ell\leq\ell$.

\vskip 2pt
{\bf Subcase 3.1.} $\hbar=1$.
In this subcase, one has $Q(w)(f^\prime)^\ell(w)=A_0(f(w)-\alpha_1)$, where $Q(w)$ is a polynomial of degree no larger than $\ell$.
Let $w_1$ be a root of $f(w)-\alpha_1=0$; a comparison of its multiplicity on both sides implies $Q(w_1)=0$, so that $f-\alpha_1$ has only finitely many zeros.
Besides, one sees that $f$ has no pole.
So, $f(w)-\alpha_1=q(w)e^{\delta(w)}$ for an entire function $\delta$ and a polynomial $q$ with $\deg(q)\leq\deg(Q)$.
Routine calculations lead to $e^{(\ell-1)\delta(w)}=\frac{A_0q(w)}{Q(w)(\delta^\prime q+q^\prime)^\ell(w)}$, which implies $\delta,q^\prime$ are constants.
That is, $f$ is linear, and thus, $u$ is prime.

In fact, all zeros of $f-\alpha_1$ are simple and $q$ is a product of distinct linear factors of $Q$.
The form $e^{(\ell-1)\delta}=\frac{A_0q}{Q(\delta^\prime q+q^\prime)^\ell}$ leads to $\delta$ a constant, and then $q/Q,q^\prime$ constants.

\vskip 2pt
{\bf Subcase 3.2.} $\hbar=2$.
In this subcase, one has $Q(w)(f^\prime)^\ell(w)=A_0(f(w)-\alpha_1)(f(w)-\alpha_2)$.
As shown above, $(f-\alpha_1)(f-\alpha_2)$ has only finitely many zeros.
Hence, $\frac{f(w)-\alpha_1}{f(w)-\alpha_2}=r(w)e^{\delta(w)}$ for an entire function $\delta$ and a rational function $r$ whose zeros and poles are from the zeros of $Q(w)$; so, $f(w)=\frac{\alpha_2r(w)e^{\delta(w)}-\alpha_1}{r(w)e^{\delta(w)}-1}$.
Routine calculations then yield
\begin{equation*}
\biggl(\frac{e^{\delta(w)/2}}{r(w)e^{\delta(w)}-1}\biggr)^{2(\ell-1)}=\frac{A_0r(w)}{(\alpha_1-\alpha_2)^{\ell-2}Q(w)(\delta^\prime r+r^\prime)^\ell(w)},
\end{equation*}
which leads to $\delta$ a constant.
That is, $f$ is rational, and thus, $u$ is pseudoprime.

\vskip 2pt
{\bf Subcase 3.3.} $\hbar\geq3$.
Now, $Q(w)(f^\prime)^\ell(w)=A_0(f(w)-\alpha_1)(f(w)-\alpha_2)\cdots(f(w)-\alpha_\hbar)$.
As in the preceding subcase, $(f(w)-\alpha_1)(f(w)-\alpha_2)\cdots(f(w)-\alpha_\hbar)$ has only finitely many zeros.
By Nevanlinna's second fundamental theorem \cite[Chapter 2]{Ha1}, one has
\begin{equation*}
(\hbar-2)T(r,f)\leq\sum_{j=1}^\hbar N\Bigl(r,\frac{1}{f-\alpha_j}\Bigr)+S(r,f)=\epsilon T(r,f)+O(\log r)
\end{equation*}
for all $r$ outside of a possible set of finite Lebesgue measure with $\epsilon>0$ arbitrarily small, which implies that $f$ is rational, and therefore, $u$ is pseudoprime.
\end{proof}

\begin{proof}[Proof of Corollary \ref{Cor2}]
As in the proof of Theorem \ref{Thm1} for equation \eqref{Eq2.5}, one has
\begin{equation}\label{Eq2.13}
(f^\prime)^\ell(w)=\frac{A_0}{(w-a_1)^{m_1}\cdots(w-a_s)^{m_s}}(f(w)-\alpha_1)^{k_1}\cdots(f(w)-\alpha_\mu)^{k_\mu}
\end{equation}
for integers $\mu,k_1,k_2,\ldots,k_\mu\geq0$ and $P(w)=\alpha_0(w-\alpha_1)^{k_1}(w-\alpha_2)^{k_2}\cdots(w-\alpha_\mu)^{k_\mu}$ satisfying $m_1+m_2+\cdots+m_s\leq\ell$, $\hbar=k_1+k_2+\cdots+k_\mu$ and $\max\{k_1,k_2,\ldots,k_\mu\}\geq2$.

Now, we only need to consider two different cases as follows.

\vskip 2pt
{\bf Case 1.} $\ell=1$.
In this case, $m_1\leq1$ with $s\leq1$.

If $\mu=1$, then $\frac{f^\prime(w)}{(f(w)-\alpha_1)^{k_1}}=\frac{A_0}{(w-a_1)^{m_1}}$.
To get a meromorphic $f$, we notice $m_1=0$, $k_1=2$ and $f(w)=-\frac{1}{A_0w+A_1}+\alpha_1$ for two constants $A_0\neq0,A_1$.
So, $u$ is prime.

If $\mu=2$, then $\frac{f^\prime(w)}{(f(w)-\alpha_1)^{k_1}(f(w)-\alpha_2)^{k_2}}=\frac{A_0}{(w-a_1)^{m_1}}$.
Since $(f-\alpha_1)(f-\alpha_2)$ may have $w=a_1$ as its only zero, $\frac{f(w)-\alpha_1}{f(w)-\alpha_2}=r(w)e^{\delta(w)}$ for an entire function $\delta$ and a (reciprocal) linear function $r$; so, $f(w)=\frac{\alpha_2r(w)e^{\delta(w)}-\alpha_1}{r(w)e^{\delta(w)}-1}$.
As in {\bf Subcase 3.2}, using $k_1+k_2\geq3$, upon standard calculations, we see that $f$ is a linear fractional function, and therefore, $u$ is prime.

If $\mu\geq3$, then exactly as in {\bf Subcase 1.3} with $\max\{k_1,k_2,\ldots,k_\mu\}\geq2$, we deduce that $f$ is a constant, and thus, $u$ is pseudoprime because $g$ must be a polynomial.

In summary, $u$ is pseudoprime when $\ell=1$ and $\max\{k_1,k_2,\ldots,k_\mu\}\geq2$.

\vskip 2pt
{\bf Case 2.} $\ell=2$.
In this case, we can utilize exactly the same analysis as in \eqref{Eq2.12} to have $f$ rational, and therefore, $u$ pseudoprime, provided $\hbar=\deg(P)\geq5$.

On the other hand, notice when $\mu=3$, $k_1=k_2=1$ and $k_3=2$, we have
\begin{equation}\label{Eq2.14}
(w-a_1)^{m_1}(w-a_2)^{m_2}(f^\prime)^2(w)=A_0(f(w)-\alpha_1)(f(w)-\alpha_2)(f(w)-\alpha_3)^2.
\end{equation}
Ishizaki-Toda \cite[Section 2]{IT} provided a detailed discussion on this equation.
Note once \eqref{Eq2.14} has a transcendental meromorphic solution, it will then have at least two such solutions.
\end{proof}

\section{Proof of Theorem \ref{Thm3}}\label{PT3} 
\begin{proof}[Proof of Theorem \ref{Thm3}]
Let $u(z)$ be an entire solution to equation \eqref{Eq1.3} in $\C^n$.
An application of \cite[Theorem 4.1]{CLY} implies that $p(w):\C\to\Pe$ must be a rational function, say,
\begin{equation*}
p(w)=c_0\frac{(w-a_1)^{m_1}(w-a_2)^{m_2}\cdots(w-a_s)^{m_s}}{(w-b_1)^{l_1}(w-b_2)^{l_2}\cdots(w-b_t)^{l_t}}
\end{equation*}
for pairwise distinct complex numbers $a_1,a_2,\ldots,a_s,b_1,b_2,\ldots,b_t$, a constant $c_0\neq0$, and integers $m_1,m_2,\ldots,m_s,l_1,l_2,\ldots,l_t\geq0$.
Therefore, one has
\begin{equation}\label{Eq3.1}
(\rho_1u_{z_1}+\rho_2u_{z_2}+\cdots+\rho_nu_{z_n})^\ell=c_0\frac{(u-a_1)^{m_1}(u-a_2)^{m_2}\cdots(u-a_s)^{m_s}}{(u-b_1)^{l_1}(u-b_2)^{l_2}\cdots(u-b_t)^{l_t}}.
\end{equation}
As the left-hand side is analytic in $\C^n$, $t$ is at most $1$, in which case $u$ assumes its only possible finite Picard value.
Without loss of generality, suppose $t=1$ and $u(z)-b_1=e^{\beta(z)}$ for an entire function $\beta(z):\C^n\to\C$; then, substitute this into \eqref{Eq3.1} to deduce
\begin{equation*}
(\rho_1\beta_{z_1}+\rho_2\beta_{z_2}+\cdots+\rho_n\beta_{z_n})^\ell=c_0\frac{(e^\beta+b_1-a_1)^{m_1}\cdots(e^\beta+b_1-a_s)^{m_s}}{e^{(\ell+l_1)\beta}},
\end{equation*}
and an application of \cite[Theorem 4.1]{CLY} leads to $u(z)$ a constant.
So, $t=0$.

We have shown that $p(w):\C\to\C$ is a polynomial; so, equation \eqref{Eq3.1} reads
\begin{equation}\label{Eq3.2}
(\rho_1u_{z_1}+\rho_2u_{z_2}+\cdots+\rho_nu_{z_n})^\ell=c_0(u-a_1)^{m_1}(u-a_2)^{m_2}\cdots(u-a_s)^{m_s}.
\end{equation}
By virtue of the {\sl logarithmic derivative lemma} (see \cite{Vi}), one immediately sees
\begin{equation}\label{Eq3.3}
\hbar=m_1+m_2+\cdots+m_s\leq\ell.
\end{equation}

Now, let $z_{a_j}\in\C^n$ be a root of $u(z_{a_j})-a_j=0$ with multiversity $\nu^{a_j}_u\in\N$.
Then, it is clear $\min\bigl\{\nu^{a_j}_{u_{z_1}},\nu^{a_j}_{u_{z_2}},\ldots,\nu^{a_j}_{u_{z_n}}\bigr\}=\nu^{a_j}_u-1$.
Using \eqref{Eq3.2}, we also note that $\ell\cdot\nu^{a_j}_{\rho_1u_{z_1}+\rho_2u_{z_2}+\cdots+\rho_nu_{z_n}}=m_j\cdot\nu^{a_j}_u$.
By \eqref{Eq3.2} and \eqref{Eq3.3}, one has either $s=1$, $m_1=\ell$ and $\nu^{a_j}_{\rho_1u_{z_1}+\rho_2u_{z_2}+\cdots+\rho_nu_{z_n}}=\nu^{a_j}_u$, or $s\geq1$, $m_j<\ell$ and $\nu^{a_j}_{\rho_1u_{z_1}+\rho_2u_{z_2}+\cdots+\rho_nu_{z_n}}=\nu^{a_j}_u-1$ so that
\begin{equation}\label{Eq3.4}
\frac{\ell}{2}\leq m_j=\ell\cdot\frac{\nu^{a_j}_u-1}{\nu^{a_j}_u}<\ell
\end{equation}
provided $\nu^{a_j}_u\geq2$.
If $\nu^{a_j}_u=1$, then $\nu^{a_j}_{\rho_1u_{z_1}+\rho_2u_{z_2}+\cdots+\rho_nu_{z_n}}=1$, $s=1$ and $m_1=\ell$.

Next, assume $a_1$ is the finite Picard value of $u$; so, $u(z)-a_1=e^{\gamma(z)}$ for an entire function $\gamma(z):\C^n\to\C$, and \eqref{Eq3.2} reads
\begin{equation*}
(\rho_1\gamma_{z_1}+\rho_2\gamma_{z_2}+\cdots+\rho_n\gamma_{z_n})^\ell=c_0\frac{(e^\gamma+a_1-a_2)^{m_2}\cdots(e^\gamma+a_1-a_s)^{m_s}}{e^{(\ell-m_1)\gamma}},
\end{equation*}
immediately implying $s=1$ and $m_1=\ell$ in view of \cite[Theorem 4.1]{CLY}.
If $s=2$, then none of $a_j$ can be the finite Picard value of $u$, and \eqref{Eq3.3} and \eqref{Eq3.4} lead to $m_1=m_2=\frac{\ell}{2}$.

In summary, one has either $s=1$ and $m_1\leq\ell$, or $s=2$ and $m_1=m_2=\frac{\ell}{2}$.

Below, we consider three different cases and their associated subcases.

\vskip 2pt
{\bf Case 1.} $s=0$.
In this case, one has $p(w)=c_0$ and
\begin{equation}\label{Eq3.5}
\rho_1u_{z_1}+\rho_2u_{z_2}+\cdots+\rho_nu_{z_n}=\sqrt[\ell]{c_0}.
\end{equation}
The characteristic curve of \eqref{Eq3.5} (see Evans \cite[Section 3.2]{Ev}), for a parameter $\tau$, reads
\begin{equation*}
\frac{dz_1}{d\tau}=\rho_1,\,\,\frac{dz_2}{d\tau}=\rho_2,\,\,\ldots,\,\,\frac{dz_n}{d\tau}=\rho_n\,\,\mathrm{and}\,\,\frac{du}{d\tau}=\sqrt[\ell]{c_0}.
\end{equation*}
Given initial conditions, say, $z_1=0,z_2=d_2,\ldots,z_n=d_n$ and $u=\varphi(d_2,\ldots,d_n)$, one has
\begin{equation*}
\begin{split}
&z_1=\rho_1\tau,\,\,z_2=\rho_2\tau+d_2,\,\,\ldots,\,\,z_n=\rho_n\tau+d_n,\\
&\tau=\frac{z_1}{\rho_1},\,\,d_2=z_2-\frac{\rho_2}{\rho_1}z_1,\,\,\ldots,\,\,d_n=z_n-\frac{\rho_n}{\rho_1}\tau,
\end{split}
\end{equation*}
and
\begin{equation*}
\begin{split}
u&=\sqrt[\ell]{c_0}\hspace{0.2mm}\tau+\varphi(d_2,\ldots,d_n)=\sqrt[\ell]{c_0}\hspace{0.2mm}(\varrho_1\tau+\varrho_2\tau+\cdots+\varrho_n\tau)+\varphi(d_2,\ldots,d_n)\\
&=\sqrt[\ell]{c_0}\Bigl(\frac{\varrho_1}{\rho_1}z_1+\frac{\varrho_2}{\rho_2}z_2+\cdots+\frac{\varrho_n}{\rho_n}z_n\Bigr)
-\sqrt[\ell]{c_0}\Bigl(\frac{\varrho_2}{\rho_2}d_2+\cdots+\frac{\varrho_n}{\rho_n}d_n\Bigr)+\varphi(d_2,\ldots,d_n)\\
&=\sqrt[\ell]{c_0}\Bigl(\frac{\varrho_1}{\rho_1}z_1+\frac{\varrho_2}{\rho_2}z_2+\cdots+\frac{\varrho_n}{\rho_n}z_n\Bigr)+\psi(d_2,\ldots,d_n)\\
&=\sqrt[\ell]{c_0}\Bigl(\frac{\varrho_1}{\rho_1}z_1+\frac{\varrho_2}{\rho_2}z_2+\cdots+\frac{\varrho_n}{\rho_n}z_n\Bigr)
+\psi\Bigl(z_2-\frac{\rho_2}{\rho_1}z_1,\ldots,z_n-\frac{\rho_n}{\rho_1}z_1\Bigr)
\end{split}
\end{equation*}
with $\varrho_1,\varrho_2,\ldots,\varrho_n$ complex numbers satisfying $\varrho_1+\varrho_2+\cdots+\varrho_n=1$.
That is,
\begin{equation*}
u(z)=\sqrt[\ell]{c_0}\hspace{0.2mm}(\sigma_1z_1+\sigma_2z_2+\cdots+\sigma_nz_n)+\Phi(z).
\end{equation*}

It is noteworthy that different initial conditions generate different $\Phi(z)$ as those in Example \ref{Exm5}, and there are other $\Phi(z)$ as described in Examples \ref{Exm4}, \ref{Exm6}, \ref{Exm7} and many more.

\vskip 2pt
{\bf Case 2.} $s=1$.
In this case, one has $p(w)=c_0(w-a_1)^\hbar$ and
\begin{equation*}
(\rho_1u_{z_1}+\rho_2u_{z_2}+\cdots+\rho_nu_{z_n})^\ell=c_0(u-a_1)^\hbar.
\end{equation*}

\vskip 2pt
{\bf Subcase 2.1.} $\hbar<\ell$.
In this subcase, we have
\begin{equation}\label{Eq3.6}
\rho_1u_{z_1}+\rho_2u_{z_2}+\cdots+\rho_nu_{z_n}=\sqrt[\ell]{c_0}\hspace{0.2mm}(u-a_1)^{\frac{\hbar}{\ell}}
\end{equation}
with $(u-a_1)^{\frac{\hbar}{\ell}}$ entire in $\C^n$.
The characteristic curve of \eqref{Eq3.6}, for a parameter $\tau$, reads
\begin{equation*}
\frac{dz_1}{d\tau}=\rho_1,\,\,\frac{dz_2}{d\tau}=\rho_2,\,\,\ldots,\,\,\frac{dz_n}{d\tau}=\rho_n\,\,\mathrm{and}\,\,
\frac{du}{d\tau}=\sqrt[\ell]{c_0}\hspace{0.2mm}(u-a_1)^{\frac{\hbar}{\ell}}.
\end{equation*}
Given initial conditions $z_1=d_1,z_2=d_2,\ldots,z_n=d_n$ and $u=\varphi(d_1,d_2,\ldots,d_n)$, one has
\begin{equation*}
(u-a_1)^{1-\frac{\hbar}{\ell}}=\frac{\ell-\hbar}{\ell}\sqrt[\ell]{c_0}\hspace{0.2mm}(\varrho_1\tau+\varrho_2\tau+\cdots+\varrho_n\tau)+\tilde{\varphi}(d_1,d_2,\ldots,d_n)
\end{equation*}
with $\tilde{\varphi}(d_1,d_2,\ldots,d_n):=(\varphi(d_1,d_2,\ldots,d_n)-a_1)^{1-\frac{\hbar}{\ell}}$ entire in $\C^n$ for appropriate $\varphi$.
So,
\begin{equation*}
u(z)=a_1+\Bigl(\frac{\ell-\hbar}{\ell}\sqrt[\ell]{c_0}\hspace{0.2mm}(\sigma_1z_1+\sigma_2z_2+\cdots+\sigma_nz_n)+\Phi(z)\Bigr)^{\frac{\ell}{\ell-\hbar}}.
\end{equation*}
It is apparent that $u(z)$ is an entire function in $\C^n$ when $\frac{\ell}{\ell-\hbar}$ is an integer.

\vskip 2pt
{\bf Subcase 2.2.} $\hbar=\ell$.
In this subcase, we have
\begin{equation}\label{Eq3.7}
\rho_1u_{z_1}+\rho_2u_{z_2}+\cdots+\rho_nu_{z_n}=\sqrt[\ell]{c_0}\hspace{0.2mm}(u-a_1).
\end{equation}
Similarly, the characteristic curve of \eqref{Eq3.7}, for a parameter $\tau$, reads
\begin{equation*}
\frac{dz_1}{d\tau}=\rho_1,\,\,\frac{dz_2}{d\tau}=\rho_2,\,\,\ldots,\,\,\frac{dz_n}{d\tau}=\rho_n\,\,\mathrm{and}\,\,\frac{du}{d\tau}=\sqrt[\ell]{c_0}\hspace{0.2mm}(u-a_1).
\end{equation*}
Given initial conditions $z_1=d_1,z_2=d_2,\ldots,z_n=d_n$ and $u=\varphi(d_1,d_2,\ldots,d_n)$, one has
\begin{equation*}
u-a_1=(\varphi(d_1,d_2,\ldots,d_n)-a_1)\exp\bigl(\sqrt[\ell]{c_0}\hspace{0.2mm}(\varrho_1\tau+\varrho_2\tau+\cdots+\varrho_n\tau)\bigr).
\end{equation*}
That is,
\begin{equation*}
u(z)=a_1+\Phi(z)\exp\bigl(\sqrt[\ell]{c_0}\hspace{0.2mm}(\sigma_1z_1+\sigma_2z_2+\cdots+\sigma_nz_n)\bigr).
\end{equation*}
When $0$ is the finite Picard value of $\Phi(z)$, $a_1$ is the finite Picard value of $u(z)$.
So, if we write $\Phi^*(z):=\ln(\Phi(z))$ to be an entire function in $\C^n$, then it follows that
\begin{equation*}
u(z)=a_1+\exp\bigl(\sqrt[\ell]{c_0}\hspace{0.2mm}(\sigma_1z_1+\sigma_2z_2+\cdots+\sigma_nz_n)+\Phi^*(z)\bigr).
\end{equation*}

\vskip 2pt
{\bf Case 3.} $s=2$.
In this case, one has $p(w)=c_0(w-a_1)^{\frac{\ell}{2}}(w-a_2)^{\frac{\ell}{2}}$ and
\begin{equation}\label{Eq3.8}
(\rho_1u_{z_1}+\rho_2u_{z_2}+\cdots+\rho_nu_{z_n})^2=\sqrt[\ell]{c^2_0}\hspace{0.2mm}(u-a_1)(u-a_2),
\end{equation}
which can be easily rewritten as
\begin{equation*}
\Bigl(\sqrt[\ell]{c_0}\Bigl(u-\frac{a_1+a_2}{2}\Bigr)\Bigr)^2-(\rho_1u_{z_1}+\rho_2u_{z_2}+\cdots+\rho_nu_{z_n})^2=\Bigl(\sqrt[\ell]{c_0}\Bigl(\frac{a_1-a_2}{2}\Bigr)\Bigr)^2.
\end{equation*}
Recalling $u$ is an entire function in $\C^n$, we deduce that
\begin{equation*}
\sqrt[\ell]{c_0}\Bigl(u-\frac{a_1+a_2}{2}\Bigr)\pm(\rho_1u_{z_1}+\rho_2u_{z_2}+\cdots+\rho_nu_{z_n})=\sqrt[\ell]{c_0}\Bigl(\frac{a_1-a_2}{2}\Bigr)e^{\pm\delta(z)}
\end{equation*}
for an entire function $\delta(z):\C^n\to\C$.
As a consequence, one observes
\begin{equation}\label{Eq3.9}
\begin{split}
&u=\frac{a_1+a_2}{2}+\frac{a_1-a_2}{2}\cosh(\delta)\,\,\mathrm{and}\\
&\rho_1u_{z_1}+\rho_2u_{z_2}+\cdots+\rho_nu_{z_n}=\sqrt[\ell]{c_0}\hspace{0.2mm}\frac{a_1-a_2}{2}\sinh(\delta),
\end{split}
\end{equation}
implying
\begin{equation*}
\sinh(\delta)(\rho_1\delta_{z_1}+\rho_2\delta_{z_2}+\cdots+\rho_n\delta_{z_n})=\sqrt[\ell]{c_0}\hspace{0.2mm}\sinh(\delta),
\end{equation*}
which leads back to equation \eqref{Eq3.5} now satisfied by $\delta(z)$.
So, \eqref{Eq3.9} yields
\begin{equation*}
u(z)=\frac{a_1+a_2}{2}+\frac{a_1-a_2}{2}\cosh\bigl(\sqrt[\ell]{c_0}\hspace{0.2mm}(\sigma_1z_1+\sigma_2z_2+\cdots+\sigma_nz_n)+\Phi(z)\bigr).
\end{equation*}

All the preceding discussions conclude the proof of Theorem \ref{Thm3}.
\end{proof}

\section{Proof of Theorem \ref{Thm8}}\label{PT8} 
We start this final section by first briefly reviewing the concept of characteristics following Evans \cite[Section 3.2]{Ev} with symbols adapted to our setting.

Given a general first-order PDE $F(Du,u,z)=0$, for a parameter $\tau$, write
\begin{equation*}
\left\{\begin{array}{ll}
z(\tau):=(z_1(\tau),z_2(\tau),\ldots,z_n(\tau)), \medskip\\
u(\tau):=u(z(\tau))\,\,\mathrm{and} \medskip\\
Du(\tau):=(u_{z_1}(z(\tau)),u_{z_2}(z(\tau)),\ldots,u_{z_n}(z(\tau))).
\end{array}\right.
\end{equation*}
The associated {\sl characteristics}, in terms of $F(x_1,x_2,\ldots,x_n,u,y_1,y_2,\ldots,y_n)$, read
\begin{equation}\label{Eq4.1}
\frac{dz(\tau)}{d\tau}=\Bigl(\frac{dz_1(\tau)}{d\tau},\frac{dz_2(\tau)}{d\tau},\ldots,\frac{dz_n(\tau)}{d\tau}\Bigr)=F_x(Du(\tau),u(\tau),z(\tau))
\end{equation}
with $F_x(x_1,x_2,\ldots,x_n,u,y_1,y_2,\ldots,y_n):=(F_{x_1},F_{x_2},\ldots,F_{x_n})$,
\begin{equation}\label{Eq4.2}
\begin{split}
\frac{dDu(\tau)}{d\tau}&=\Bigl(\frac{du_{z_1}(z(\tau))}{d\tau},\frac{du_{z_2}(z(\tau))}{d\tau},\ldots,\frac{du_{z_n}(z(\tau))}{d\tau}\Bigr)\\
&=-F_u(Du(\tau),u(\tau),z(\tau))Du(\tau)-F_y(Du(\tau),u(\tau),z(\tau))
\end{split}
\end{equation}
with $F_y(x_1,x_2,\ldots,x_n,u,y_1,y_2,\ldots,y_n):=(F_{y_1},F_{y_2},\ldots,F_{y_n})$, and
\begin{equation}\label{Eq4.3}
\frac{du(\tau)}{d\tau}=Du(\tau)\cdot\frac{dz(\tau)}{d\tau}=Du(\tau)\cdot F_x(Du(\tau),u(\tau),z(\tau)).
\end{equation}

Equation \eqref{Eq4.1} is the key to the success of characteristics, and if $F(Du,u,z)=0$ is linear as in the situation of Theorem \ref{Thm3}, then only equations \eqref{Eq4.1} and \eqref{Eq4.3} are needed.

\begin{proof}[Proof of Theorem \ref{Thm8}]
For equation \eqref{Eq1.4}, it is readily seen that $\hbar\leq\ell$ using the same analysis as before and its associated characteristics are simplified to be
\begin{equation}\label{Eq4.4}
\begin{split}
&\frac{dz(\tau)}{d\tau}=\bigl(\ell u^{\ell-1}_{z_1}(z(\tau)),\ell u^{\ell-1}_{z_2}(z(\tau)),\ldots,\ell u^{\ell-1}_{z_n}(z(\tau))\bigr),\\
&\frac{dDu(\tau)}{d\tau}=\hbar u^{\hbar-1}Du(\tau)\,\,\mathrm{and}\,\,\frac{du(\tau)}{d\tau}=\ell u^\hbar.
\end{split}
\end{equation}

Below, we consider four different cases to finish our discussions.

\vskip 2pt
{\bf Case 1.} $\hbar=0$.
In this case, we further consider the partial differential equation
\begin{equation}\label{Eq4.5}
u^{\ell_1}_{z_1}+u^{\ell_2}_{z_2}+\cdots+u^{\ell_n}_{z_n}=1
\end{equation}
with $\ell_1,\ell_2,\ldots,\ell_n\geq1$ integers, not necessarily the same.
As now $\frac{dDu(\tau)}{d\tau}=0$ independent of $\tau$, we write $u_{z_j}(z(\tau))=\sigma_j$ for $j=1,2,\ldots,n$ and $\frac{dz(\tau)}{d\tau}=\bigl(\ell_1\sigma_1^{\ell_1-1},\ell_2\sigma_2^{\ell_2-1},\ldots,\ell_n\sigma_n^{\ell_n-1}\bigr)$.
Given initial conditions, say, $z_1=0,z_2=d_2,\ldots,z_n=d_n$ and $u=\varphi(d_2,\ldots,d_n)$, one has
\begin{equation*}
z_1=\ell_1\sigma_1^{\ell_1-1}\tau,\,\,z_2=\ell_2\sigma_2^{\ell_2-1}\tau+d_2,\,\,\ldots,\,\,z_n=\ell_n\sigma_n^{\ell_n-1}\tau+d_n
\end{equation*}
and
\begin{equation*}
\begin{split}
u&=\bigl(\ell_1\sigma_1^{\ell_1}+\ell_2\sigma_2^{\ell_2}+\cdots+\ell_n\sigma_n^{\ell_n}\bigr)\tau+\varphi(d_2,\ldots,d_n)\\
&=\bigl(\ell_1\sigma_1^{\ell_1}+\ell_2\sigma_2^{\ell_2}+\cdots+\ell_n\sigma_n^{\ell_n}\bigr)(\varrho_1\tau+\varrho_2\tau+\cdots+\varrho_n\tau)+\varphi(d_2,\ldots,d_n)\\
&=\bigl(\ell_1\sigma_1^{\ell_1}+\ell_2\sigma_2^{\ell_2}+\cdots+\ell_n\sigma_n^{\ell_n}\bigr)
\biggl(\frac{\varrho_1z_1}{\ell_1\sigma_1^{\ell_1-1}}+\frac{\varrho_2z_2}{\ell_2\sigma_2^{\ell_2-1}}+\cdots+\frac{\varrho_nz_n}{\ell_n\sigma_n^{\ell_n-1}}\biggr)+\Phi(z)
\end{split}
\end{equation*}
following Theorem \ref{Thm3}, {\bf Case 1} verbatim for constants $\varrho_1,\varrho_2,\ldots,\varrho_n$ with $\varrho_1+\varrho_2+\cdots+\varrho_n=1$.
Take $\varrho_j:=\frac{\ell_j\sigma^{\ell_j}_j}{\ell_1\sigma_1^{\ell_1}+\ell_2\sigma_2^{\ell_2}+\cdots+\ell_n\sigma_n^{\ell_n}}$ for $j=1,2,\ldots,n$ to deduce
\begin{equation*}
u(z)=\sigma_1z_1+\sigma_2z_2+\cdots+\sigma_nz_n+\Phi(z)
\end{equation*}
with $\sigma^{\ell_1}_1+\sigma^{\ell_2}_2+\cdots+\sigma^{\ell_n}_n=1$ and $\sum^n_{j=1}\sum^{\ell_j}_{\iota=1}\sigma^{\ell_j-\iota}_j\Phi^\iota_{z_j}=0$.

\vskip 2pt
{\bf Case 2.} $\hbar=1$.
In this case, we further consider the partial differential equation
\begin{equation}\label{Eq4.6}
u^{\ell_1}_{z_1}+u^{\ell_2}_{z_2}+\cdots+u^{\ell_n}_{z_n}=u.
\end{equation}
The second equation in \eqref{Eq4.4} now reads $\frac{dDu(\tau)}{d\tau}=Du(\tau)$, and thus,
\begin{equation}\label{Eq4.7}
Du(\tau)=(u_{z_1}(z(\tau)),u_{z_2}(z(\tau)),\ldots,u_{z_n}(z(\tau)))=(\varsigma_1e^\tau,\varsigma_2e^\tau,\ldots,\varsigma_ne^\tau)
\end{equation}
with $\varsigma_j:=u_{z_j}(z(0))$ for $j=1,2,\ldots,n$.
Consequently, this leads to
\begin{equation*}
\begin{split}
\frac{dz(\tau)}{d\tau}&=\bigl(\ell_1u^{\ell_1-1}_{z_1}(z(\tau)),\ell_2u^{\ell_2-1}_{z_2}(z(\tau)),\ldots,\ell_nu^{\ell_n-1}_{z_n}(z(\tau))\bigr)\\
&=\bigl(\ell_1\varsigma^{\ell_1-1}_1e^{(\ell_1-1)\tau},\ell_2\varsigma^{\ell_2-1}_2e^{(\ell_2-1)\tau},\ldots,\ell_n\varsigma^{\ell_n-1}_ne^{(\ell_n-1)\tau}\bigr),
\end{split}
\end{equation*}
so that
\begin{equation}\label{Eq4.8}
z_j(\tau)=\frac{\ell_j}{\ell_j-1}\varsigma^{\ell_j-1}_j\bigl(e^{(\ell_j-1)\tau}-1\bigr)+d_j
\end{equation}
with $d_j:=z_j(0)$ for $j=1,2,\ldots,n$.
Finally, by \eqref{Eq4.3}, we have
\begin{equation*}
\frac{du(\tau)}{d\tau}=\ell_1\varsigma^{\ell_1}_1e^{\ell_1\tau}+\ell_2\varsigma^{\ell_2}_2e^{\ell_2\tau}+\cdots+\ell_n\varsigma^{\ell_n}_ne^{\ell_n\tau},
\end{equation*}
and thus, for $u_0:=u(z(0))=\varphi(d_1,d_2,\ldots,d_n)$, one observes
\begin{equation}\label{Eq4.9}
u(\tau)=\varsigma^{\ell_1}_1\bigl(e^{\ell_1\tau}-1\bigr)+\varsigma^{\ell_2}_2\bigl(e^{\ell_2\tau}-1\bigr)+\cdots+\varsigma^{\ell_n}_n\bigl(e^{\ell_n\tau}-1\bigr)+u_0.
\end{equation}
Combine \eqref{Eq4.8} and \eqref{Eq4.9} with routine calculations to deduce
\begin{equation*}
u(z)=\sum^n_{j=1}\varsigma^{\ell_j}_j\biggl(\frac{(z_j-d_j)(\ell_j-1)}{\ell_j\varsigma^{\ell_j-1}_j}+1\biggr)^{\frac{\ell_j}{\ell_j-1}}
\end{equation*}
as $\varsigma^{\ell_1}_1+\varsigma^{\ell_2}_2+\cdots+\varsigma^{\ell_n}_n=u_0$ by \eqref{Eq4.6}, \eqref{Eq4.7} and \eqref{Eq4.9}.
To have $u$ entire, it must be $\ell_1=\ell_2=\cdots=\ell_n=2$; therefore,
\begin{equation}\label{Eq4.10}
u(z)=\frac{z^2_1}{4}+\frac{z^2_2}{4}+\cdots+\frac{z^2_n}{4}+\Lambda(z),
\end{equation}
where $\Lambda(z)$ is an entire function in $\C^n$ depending on $d_j,\varsigma_j$ for $j=1,2,\ldots,n$.

Below, we show $\Lambda(z)$ is linear.
In fact, $u$ being an entire solution to \eqref{Eq4.6} implies
\begin{equation}\label{Eq4.11}
\sum^n_{j=1}\bigl(z_j\Lambda_{z_j}(z)+\Lambda^2_{z_j}(z)\bigr)-\Lambda(z)=0.
\end{equation}
Equation \eqref{Eq4.2} immediately yields $\frac{dD\Lambda(\tau)}{d\tau}=0$ independent of $\tau$ using the same parameter; so, $\Lambda_{z_j}(z(\tau))=c_j$ and $\frac{dz_j(\tau)}{d\tau}=z_j(\tau)+2c_j$ for $j=1,2,\ldots,n$ by equation \eqref{Eq4.1}.
Hence,
\begin{equation*}
z_j(\tau)=(d^*_j+2c_j)e^\tau-2c_j
\end{equation*}
with $d^*_j:=z_j(0)$ for $j=1,2,\ldots,n$.
Finally, equation \eqref{Eq4.3} implies
\begin{equation*}
\begin{split}
\frac{d\Lambda(\tau)}{d\tau}&=c_1z_1(\tau)+c_2z_2(\tau)+\cdots+c_nz_n(\tau)+2c^2_1+2c^2_2+\cdots+2c^2_n\\
&=c_1(d^*_1+2c_1)e^\tau+c_2(d^*_2+2c_2)e^\tau+\cdots+c_n(d^*_n+2c_n)e^\tau,
\end{split}
\end{equation*}
which leads to
\begin{equation*}
\Lambda(\tau)=c_1(d^*_1+2c_1)(e^\tau-1)+c_2(d^*_2+2c_2)(e^\tau-1)+\cdots+c_n(d^*_n+2c_n)(e^\tau-1)+\Lambda_0
\end{equation*}
with $\Lambda_0:=\Lambda(z(0))$, so that
\begin{equation}\label{Eq4.12}
\Lambda(z)=c_1z_1+c_2z_2+\cdots+c_nz_n+\Lambda^*(z)
\end{equation}
with $\Lambda^*(z)$ an entire function in $\C^n$ depending on $c_j,d^*_j$ for $j=1,2,\ldots,n$.
Suppose, without loss of generality, $\Lambda^*(z)$ has no linear terms that can be easily achieved from absorbing those terms into $c_1z_1+c_2z_2+\cdots+c_nz_n$, if necessary.
Then, one has
\begin{equation}\label{Eq4.13}
\Lambda^*(z)=c_0+\sum_{1\leq j\leq k\leq n}c_{jk}z_jz_k+\mathrm{terms}\,\,\mathrm{of}\,\,(z^3)\,\,\mathrm{or}\,\,\mathrm{higher}
\end{equation}
by abuse of notation of the term $z^3$ and
\begin{equation}\label{Eq4.14}
\sum^n_{j=1}\bigl[z_j\Lambda^*_{z_j}(z)+\bigl(c_j+\Lambda^*_{z_j}(z)\bigr)^2\bigr]-\Lambda^*(z)=0.
\end{equation}
Apply equation \eqref{Eq4.2} to \eqref{Eq4.14} to derive $\frac{dD\Lambda^*(\tau)}{d\tau}=0$ along any parametric curve/path, which together with \eqref{Eq4.13} yields $\Lambda^*(z)=c_0$.
So, equations \eqref{Eq4.10} and \eqref{Eq4.12} lead to
\begin{equation*}
u(z)=\Bigl(\frac{z_1}{2}+c_1\Bigr)^2+\Bigl(\frac{z_2}{2}+c_2\Bigr)^2+\cdots+\Bigl(\frac{z_n}{2}+c_n\Bigr)^2
\end{equation*}
in view of $c^2_1+c^2_2+\cdots+c^2_n=c_0$ by virtue of \eqref{Eq4.13} and \eqref{Eq4.14}.

\vskip 2pt
{\bf Case 4.} $\hbar=\ell$.
Now, the last equation in \eqref{Eq4.4} reads $\frac{du(\tau)}{d\tau}=\ell u^\ell$ so that
\begin{equation*}
\frac{1}{u^{\ell-1}(\tau)}=-\ell(\ell-1)\tau+\frac{1}{u_0^{\ell-1}}
\end{equation*}
with $u_0:=u(z(0))=\varphi(d_1,d_2,\ldots,d_n)$, which then yields
\begin{equation}\label{Eq4.15}
u(\tau)=\frac{u_0}{\bigl(1-\ell(\ell-1)u^{\ell-1}_0\tau\bigr)^{\frac{1}{\ell-1}}}.
\end{equation}
Equation \eqref{Eq4.15} combined with the second equation in \eqref{Eq4.4} further leads to
\begin{equation*}
\frac{du_{z_j}(z(\tau))}{d\tau}=\ell u^{\ell-1}u_{z_j}(z(\tau))=\frac{\ell u^{\ell-1}_0}{1-\ell(\ell-1)u^{\ell-1}_0\tau}u_{z_j}(z(\tau)),
\end{equation*}
so that
\begin{equation*}
u_{z_j}(z(\tau))=\frac{\varsigma_j}{\bigl(1-\ell(\ell-1)u^{\ell-1}_0\tau\bigr)^{\frac{1}{\ell-1}}}
\end{equation*}
with $\varsigma_j:=u_{z_j}(z(0))$ for $j=1,2,\ldots,n$.
Finally, one observes
\begin{equation*}
\frac{dz_j(\tau)}{d\tau}=\ell u^{\ell-1}_{z_j}(z(\tau))=\frac{\ell\varsigma^{\ell-1}_j}{1-\ell(\ell-1)u^{\ell-1}_0\tau}
\end{equation*}
using the first equation in \eqref{Eq4.4}, which implies
\begin{equation*}
z_j(\tau)=\frac{\varsigma^{\ell-1}_j}{u^{\ell-1}_0}\ln\Biggl(\frac{1}{\bigl(1-\ell(\ell-1)u^{\ell-1}_0\tau\bigr)^{\frac{1}{\ell-1}}}\Biggr)+d_j,
\end{equation*}
or, in a more convenient form for the purpose of comparing with \eqref{Eq4.15},
\begin{equation}\label{Eq4.16}
\frac{1}{\bigl(1-\ell(\ell-1)u^{\ell-1}_0\tau\bigr)^{\frac{1}{\ell-1}}}=e^{\frac{u^{\ell-1}_0}{\varsigma^{\ell-1}_j}(z_j(\tau)-d_j)}
\end{equation}
with $d_j:=z_j(0)$ for $j=1,2,\ldots,n$.
Therefore, by \eqref{Eq4.15} and \eqref{Eq4.16}, we have
\begin{equation*}
u(\tau)=\frac{u_0}{\bigl(1-\ell(\ell-1)u^{\ell-1}_0\tau\bigr)^{\frac{\sum^n_{j=1}\varrho_j}{\ell-1}}}\\
=u_0\prod^n_{j=1}\exp\biggl(\frac{\varrho_ju_0^{\ell-1}}{\varsigma^{\ell-1}_j}(z_j(\tau)-d_j)\biggr)
\end{equation*}
with $\varrho_1+\varrho_2+\cdots+\varrho_n=1$, which combined with $\varsigma^\ell_1+\varsigma^\ell_2+\cdots+\varsigma^\ell_n=u^\ell_0$ leads to
\begin{equation}\label{Eq4.17}
\begin{split}
u(z)&=u_0\exp\biggl(\sum^n_{j=1}\varrho_jz_j\biggl(\frac{\varsigma^\ell_1+\varsigma^\ell_2+\cdots+\varsigma^\ell_n}{\varsigma^\ell_j}\biggr)^{\frac{\ell-1}{\ell}}
+\Lambda_0(z)\biggr)\\
&=u_0\exp\biggl(\frac{\varsigma_1z_1+\varsigma_2z_2+\cdots+\varsigma_nz_n}{\sqrt[\ell]{\varsigma^\ell_1+\varsigma^\ell_2+\cdots+\varsigma^\ell_n}}
+\Lambda_1(z)\biggr)\\
&=u_0\exp\bigl(\sigma_1z_1+\sigma_2z_2+\cdots+\sigma_nz_n+\Lambda_2(z)\bigr)\\
&=\Psi(z)\exp(\sigma_1z_1+\sigma_2z_2+\cdots+\sigma_nz_n)
\end{split}
\end{equation}
by taking $\varrho_j:=\frac{\varsigma^\ell_j}{\varsigma^\ell_1+\varsigma^\ell_2+\cdots+\varsigma^\ell_n}$ and $\sigma_j:=\frac{\varsigma_j}{\sqrt[\ell]{\varsigma^\ell_1+\varsigma^\ell_2+\cdots+\varsigma^\ell_n}}$ for $j=1,2,\ldots,n$, with $\Lambda_{\mu}(z),\Psi(z)$ entire functions in $\C^n$ depending on $d_j,\varsigma_j$ for $j=1,2,\ldots,n$ and $\mu=0,1,2$.

It is worthwhile to note $u_0=\sqrt[\ell]{\varsigma^\ell_1+\varsigma^\ell_2+\cdots+\varsigma^\ell_n}$ can be a nontrivial entire function having zeros in $\C^n$; when laid in a quotient form, we implicitly meant all zeros in the numerator and denominator cancelled out, except for an analytic subset of $\C^n$ of codimension at least $2$.
So, $\varrho_j,\sigma_j\in\C$ were defined via the constant terms in the Taylor expansions of $u_0$ and $\varsigma_j$ over $\C^n$ for $j=1,2,\ldots,n$ respectively (by the same notations as those used to denote them as entire functions), with consensus that the remaining terms were merged into $\Lambda_0(z),\Lambda_1(z),\Lambda_2(z)$ and finally into $\Psi(z):=u_0(z)\exp(\Lambda_2(z))$ such that $\sum^n_{j=1}\sum^\ell_{\iota=1}(\sigma_j\Psi)^{\ell-\iota}\Psi^\iota_{z_j}=0$.

On the other hand, when $0$ is the finite Picard value of $\Psi(z)$, we can write $\Phi(z):=\ln(\Psi(z))$ to have $\sum^n_{j=1}\sum^\ell_{\iota=1}\sigma^{\ell-\iota}_j\Phi^\iota_{z_j}=0$ from \eqref{Eq1.4} through routine calculations and
\begin{equation*}
u(z)=\exp\bigl(\sigma_1z_1+\sigma_2z_2+\cdots+\sigma_nz_n+\Phi(z)\bigr).
\end{equation*}

Finally, we discuss {\bf Case 3}, whose proof follow those of {\bf Cases 2\&4} closely.
In particular, we refer to the preceding discussions when defining $\varrho_j,\sigma_j\in\C$ and require additionally that $u_0$ be an entire function in $\C^n$ such that $u^{\frac{\hbar}{\ell}}_0$ is also entire in $\C^n$.

\vskip 2pt
{\bf Case 3.} $1\leq\hbar<\ell$.
In this case, by the last equation in \eqref{Eq4.4}, one has
\begin{equation}\label{Eq4.18}
\begin{split}
u(\tau)&=u_0e^{\ell\tau}\hspace{30.09mm}\mathrm{if}\,\,\hbar=1\\
u(\tau)&=\frac{u_0}{\bigl(1-\ell(\hbar-1)u^{\hbar-1}_0\tau\bigr)^{\frac{1}{\hbar-1}}}\,\,\mathrm{if}\,\,1<\hbar<\ell
\end{split}
\end{equation}
with $u_0:=u(z(0))=\varphi(d_1,d_2,\ldots,d_n)$, which, by the second equation in \eqref{Eq4.4}, further imply
\begin{equation*}
\begin{split}
u_{z_j}(z(\tau))&=\varsigma_je^\tau\hspace{35.06mm}\mathrm{if}\,\,\hbar=1\\
u_{z_j}(z(\tau))&=\frac{\varsigma_j}{\bigl(1-\ell(\hbar-1)u^{\hbar-1}_0\tau\bigr)^{\frac{\hbar}{\ell(\hbar-1)}}}\,\,\mathrm{if}\,\,1<\hbar<\ell
\end{split}
\end{equation*}
with $\varsigma_j:=u_{z_j}(z(0))$ for $j=1,2,\ldots,n$.
Recalling
\begin{equation*}
\begin{split}
\frac{dz_j(\tau)}{d\tau}&=\ell\varsigma^{\ell-1}_je^{(\ell-1)\tau}\hspace{22.86mm}\mathrm{if}\,\,\hbar=1\\
\frac{dz_j(\tau)}{d\tau}&=\frac{\ell\varsigma^{\ell-1}_j}{\bigl(1-\ell(\hbar-1)u^{\hbar-1}_0\tau\bigr)^{\frac{\hbar(\ell-1)}{\ell(\hbar-1)}}}\,\,\mathrm{if}\,\,1<\hbar<\ell
\end{split}
\end{equation*}
by the first equation in \eqref{Eq4.4}, we deduce that
\begin{equation*}
\begin{split}
z_j(\tau)&=\frac{\ell}{\ell-1}\varsigma^{\ell-1}_j\bigl(e^{(\ell-1)\tau}-1\bigr)+d_j\hspace{36.89mm}\mathrm{if}\,\,\hbar=1 \medskip\\
z_j(\tau)&=\frac{\ell\varsigma^{\ell-1}_j}{(\ell-\hbar)u^{\hbar-1}_0}
\Biggl(\frac{1}{\bigl(1-\ell(\hbar-1)u^{\hbar-1}_0\tau\bigr)^{\frac{\ell-\hbar}{\ell(\hbar-1)}}}-1\Biggr)+d_j\,\,\mathrm{if}\,\,1<\hbar<\ell
\end{split}
\end{equation*}
and, in a more convenient form for the comparison with \eqref{Eq4.18}, that
\begin{equation}\label{Eq4.19}
\begin{split}
&e^{\tau}=\biggl(\frac{(z_j(\tau)-d_j)(\ell-1)}{\ell\varsigma^{\ell-1}_j}+1\biggr)^\frac{1}{\ell-1}\hspace{42.52mm}\mathrm{if}\,\,\hbar=1 \medskip\\
&\frac{1}{\bigl(1-\ell(\hbar-1)u^{\hbar-1}_0\tau\bigr)^{\frac{1}{\hbar-1}}}
=\biggl(\frac{(z_j(\tau)-d_j)(\ell-\hbar)u^{\hbar-1}_0}{\ell\varsigma^{\ell-1}_j}+1\biggr)^\frac{\ell}{\ell-\hbar}\,\,\mathrm{if}\,\,1<\hbar<\ell
\end{split}
\end{equation}
with $d_j:=z_j(0)$ for $j=1,2,\ldots,n$.

Now, when $\hbar=1$, the first equations in \eqref{Eq4.18} and \eqref{Eq4.19} yield $\ell=2$, and the first equation in \eqref{Eq4.18} can be further rewritten as
\begin{equation*}
u(\tau)=u_0\biggl(\sum^n_{j=1}\varrho_je^\tau\biggr)^2=u_0\biggl(\sum^n_{j=1}\varrho_j\biggl(\frac{z_j(\tau)-d_j}{2\varsigma_j}+1\biggr)\biggr)^2,
\end{equation*}
which further implies that, seeing $u_0=\varsigma^2_1+\varsigma^2_2+\cdots+\varsigma^2_n$,
\begin{equation}\label{Eq4.20}
\begin{split}
u(z)&=u_0\biggl(\frac{1}{2}\biggl(\frac{\varrho_1z_1}{\varsigma_1}+\frac{\varrho_2z_2}{\varsigma_2}+\cdots+\frac{\varrho_nz_n}{\varsigma_n}\biggr)+\Lambda_0(z)\biggr)^2\\
&=\biggl(\frac{1}{2}\frac{\varsigma_1z_1+\varsigma_2z_2+\cdots+\varsigma_nz_n}{\varsigma^2_1+\varsigma^2_2+\cdots+\varsigma^2_n}
\bigl(\varsigma^2_1+\varsigma^2_2+\cdots+\varsigma^2_n\bigr)^\frac{1}{2}+\Lambda_1(z)\biggr)^2\\
&=\Bigl(\frac{1}{2}(\sigma_1z_1+\sigma_2z_2+\cdots+\sigma_nz_n)+\Phi(z)\Bigr)^2
\end{split}
\end{equation}
by taking $\varrho_j:=\frac{\varsigma^2_j}{\varsigma^2_1+\varsigma^2_2+\cdots+\varsigma^2_n}$ and $\sigma_j:=\frac{\varsigma_j}{\sqrt{\varsigma^2_1+\varsigma^2_2+\cdots+\varsigma^2_n}}$ for $j=1,2,\ldots,n$, with $\Lambda_\mu(z),\Phi(z)$ entire functions in $\C^n$ depending on $d_j,\varsigma_j$ for $j=1,2,\ldots,n$ and $\mu=0,1$.

Next, when $1<\hbar<\ell$, the second equations in \eqref{Eq4.18} and \eqref{Eq4.19} show that $\frac{\ell}{\ell-\hbar}$ needs to be an integer, and the second equation in \eqref{Eq4.18} can be further rewritten as
\begin{equation*}
\begin{split}
u(\tau)&=u_0\Biggl(\sum^n_{j=1}\varrho_j\frac{1}{\bigl(1-\ell(\hbar-1)u^{\hbar-1}_0\tau\bigr)^{\frac{\ell-\hbar}{\ell(\hbar-1)}}}\Biggr)^{\frac{\ell}{\ell-\hbar}}\\
&=u_0\biggl(\sum^n_{j=1}\varrho_j\biggl(\frac{(z_j(\tau)-d_j)(\ell-\hbar)u^{\hbar-1}_0}{\ell\varsigma^{\ell-1}_j}+1\biggr)\biggr)^{\frac{\ell}{\ell-\hbar}},
\end{split}
\end{equation*}
which further implies that, seeing $u^\hbar_0=\varsigma^\ell_1+\varsigma^\ell_2+\cdots+\varsigma^\ell_n$,
\begin{equation}\label{Eq4.21}
\begin{split}
u(z)&=u_0\biggl(\frac{\ell-\hbar}{\ell}\biggl(\frac{\varrho_1z_1}{\varsigma^{\ell-1}_1}+\frac{\varrho_2z_2}{\varsigma^{\ell-1}_2}
+\cdots+\frac{\varrho_nz_n}{\varsigma^{\ell-1}_n}\biggr)u^{\hbar-1}_0+\Lambda_0(z)\biggr)^{\frac{\ell}{\ell-\hbar}}\\
&=\biggl(\frac{\ell-\hbar}{\ell}
\frac{\varsigma_1z_1+\varsigma_2z_2+\cdots+\varsigma_nz_n}{\varsigma^\ell_1+\varsigma^\ell_2+\cdots+\varsigma^\ell_n}u^{\hbar-1+\frac{\ell-\hbar}{\ell}}_0
+\Lambda_1(z)\biggr)^{\frac{\ell}{\ell-\hbar}}\\
&=\biggl(\frac{\ell-\hbar}{\ell}\frac{\varsigma_1z_1+\varsigma_2z_2+\cdots+\varsigma_nz_n}{\varsigma^\ell_1+\varsigma^\ell_2+\cdots+\varsigma^\ell_n}
\bigl(\varsigma^\ell_1+\varsigma^\ell_2+\cdots+\varsigma^\ell_n\bigr)^{1-\frac{1}{\ell}}+\Lambda_1(z)\biggr)^{\frac{\ell}{\ell-\hbar}}\\
&=\Bigl(\frac{\ell-\hbar}{\ell}(\sigma_1z_1+\sigma_2z_2+\cdots+\sigma_nz_n)+\Phi(z)\Bigr)^{\frac{\ell}{\ell-\hbar}}
\end{split}
\end{equation}
by taking $\varrho_j:=\frac{\varsigma^\ell_j}{\varsigma^\ell_1+\varsigma^\ell_2+\cdots+\varsigma^\ell_n}$ and $\sigma_j:=\frac{\varsigma_j}{\sqrt[\ell]{\varsigma^\ell_1+\varsigma^\ell_2+\cdots+\varsigma^\ell_n}}$ for $j=1,2,\ldots,n$, with $\Lambda_\mu(z),\Phi(z)$ entire functions in $\C^n$ depending on $d_j,\varsigma_j$ for $j=1,2,\ldots,n$ and $\mu=0,1$.

All the preceding discussions conclude the proof of Theorem \ref{Thm8}.
\end{proof}

\vskip 6pt
{\small{\bf Acknowledgement.} The author wholeheartedly thanks the editor and the anonymous reviewers for valuable suggestions.
The author acknowledges Dr. Wei Chen of providing the references \cite{LT,LZ} and reading the initial proof of Theorem \ref{Thm1} with helpful comments, and Dr. Jingbo Liu of providing the reference \cite{BMS} and discussing the examples and the proof of Theorem \ref{Thm8}, {\bf Cases 2-4}.}

\vskip 6pt

\end{document}